\newcommand{\R}{\mathbb{R}}
\newcommand{\C}{\mathcal{C}}
\newcommand{\D}{\mathcal{D}}
\newcommand{\E}{\mathbb{E}}
\renewcommand{\P}{\mathbb{P}}
\newcommand{\I}{\mathcal{I}}
\newcommand{\N}{\mathcal{N}}
\newcommand{\Z}{\mathbb{Z}}
\newcommand{\NN}{\mathbb{N}}
\newcommand{\tr}{\mathrm{tr}\,}
\newcommand{\cov}{\mathrm{Cov}}
\newcommand{\var}{\mathrm{Var}}
\newcommand{\ld}{\mathrm{lead}}
\newcommand{\ldeg}{\mathrm{ldeg}}
\newcommand{\e}{\varepsilon}
\renewcommand{\vec}[1]{\mathbf{#1}}
\newtheorem{theorem}{Theorem}[section]
\newtheorem{lemma}[theorem]{Lemma}
\newtheorem{proposition}[theorem]{Proposition}
\newtheorem{corollary}[theorem]{Corollary}
\theoremstyle{definition}
\theoremstyle{definition}
\title{Discrete Derivative Asymptotics of the $\beta$-Hermite Eigenvalues}
\author{Gopal Goel and Andrew Ahn}
\date{\today}
\begin{document}
\maketitle

\begin{abstract}
We consider the asymptotics of the difference between the empirical measures of the $\beta$-Hermite tridiagonal matrix and its minor. We prove that this difference has a deterministic limit and Gaussian fluctuations. Through a correspondence between measures and continual Young diagrams, this deterministic limit is identified with the Vershik-Kerov-Logan-Shepp curve. Moreover, the Gaussian fluctuations are identified with a sectional derivative of the Gaussian free field.
\end{abstract}

\section{Introduction}
For $\beta > 0$, the $\beta$-Hermite ensemble is the random point process $\lambda_1 > \cdots > \lambda_N$ with probability distribution proportional to
\begin{align} \label{eq:beta}
\prod_{1 \le i < j \le N} (\lambda_i- \lambda_j)^\beta \prod_{i=1}^N e^{-\frac{\lambda_i^2}{2}}.
\end{align}
This is the joint eigenvalue distribution of the Gaussian Orthogonal Ensemble (GOE) for $\beta = 1$, Gaussian Unitary Ensemble (GUE) for $\beta = 2$, and Gaussian Symplectic Ensemble (GSE) for $\beta = 4$; see \cite[Sections 2.5 and 4.1]{GUIONNET} for background on these classical matrix ensembles. Consider the random symmetric tridiagonal matrix
\begin{equation}\label{eq:tridiagonal}
 X_N \sim  \frac{1}{\sqrt{2}} \left( \begin{array}{ccccc} \N(0, 2) & \chi_{(N-1) \beta} & & & \\  \chi_{(N-1) \beta} & \N(0, 2)  & \chi_{(N-2) \beta} & & \\  & \ddots & \ddots & \ddots & \\ & & \chi_{2\beta} & \N(0,2) & \chi_{\beta} \\ & &  & \chi_{\beta} & \N(0,2) \end{array} \right)
\end{equation}
where we interpret $\chi_a$ to be a chi distributed random variable with parameter $a$, $\N(a,b)$ as Gaussian with mean $a$ and variance $b$, and the entries are independent random variables (except for the symmetry constraint). In \cite{EDELMAN}, Dumitriu and Edelman showed that the random symmetric tridiagonal matrix (\ref{eq:tridiagonal}) has joint eigenvalue distribution (\ref{eq:beta}) for arbitrary $\beta > 0$. For $\beta = 1,2,4$, these tridiagonal matrix models correspond to tridiagonalizations of the GOE, GUE, GSE respectively; a procedure which preserves the joint distribution of eigenvalues of the original matrix \emph{and} its minor (see Section \ref{tridiagonal_discussion}). Let $X_{N-1}$ denote the lower right $(N-1)\times(N-1)$ minor of $X_N$. Let the eigenvalues of $X_N$ and $X_{N-1}$ be denoted by $\lambda_1^{(N)} > \cdots > \lambda_N^{(N)}$ and $\lambda_1^{(N-1)} > \cdots > \lambda_{N-1}^{(N-1)}$ respectively.

In this article, we focus on the asymptotics of the difference of empirical measures
\begin{align} \label{eq:diff}
\sum_{i=1}^N \delta_{\lambda_i^{(N)}} - \sum_{i=1}^{N-1} \delta_{\lambda_i^{(N-1)}}.
\end{align}
The measure above can be interpreted as the second derivative of a continual Young diagram, a connection which is described more precisely in Section \ref{setup}. For $X_N$ a Wigner matrix, the limit of this random Young diagram as $N\to\infty$ was studied in \cite{ALEXEY} and \cite{ERDOS}. In particular, it was shown in \cite{ALEXEY} that the random Young diagram, under proper rescaling, converges to the Vershik-Kerov-Logan-Shepp curve
\[ \Omega(x) = \left\{ \begin{array}{ll}
|x| & \mbox{if $|x| > 2$} \\
\frac{2}{\pi} \left( x \arcsin \frac{x}{2} + \sqrt{4 - x^2} \right) & \mbox{otherwise.}
\end{array} \right. \]
The fluctuations from this deterministic limit were studied in \cite{ERDOS} and were identified with a sectional derivative of the $2$-dimensional Gaussian free field (GFF). The Vershik-Kerov-Logan-Shepp curve is also found to arise in asymptotic representation theory, as the limit of properly rescaled Young diagrams under the Plancherel measure \cite{KEROV}.

The appearance of a sectional derivative of the GFF is no coincidence. In \cite{BORODIN}, the random process formed by the eigenvalues of a Wigner matrix and its minors was shown to converge to the GFF; similar results exist for Wishart matrices \cite{DUPA} and $\beta$-Jacobi ensembles \cite{BOGO}. Since the measure (\ref{eq:diff}) is a discrete derivative in the direction of levels of minors, the convergence of (\ref{eq:diff}) to a sectional derivative of the GFF shows that the convergence of Wigner matrices to the GFF also holds in the derivative sense. We discuss this in more detail in Section \ref{GFF}, based off a similar discussion in \cite{ERDOS}.

The aim of this article is to extend these global asymptotic theorems to the $\beta$-Hermite tridiagonal matrices for $\beta > 0$ and to demonstrate the accessibility of these results through simple combinatorics of tridiagonal matrices. Although the theorems for $\beta = 1$ and $2$ are special cases of the results of \cite{ALEXEY} and \cite{ERDOS}, this article is the first treatment of the global asymptotics of (\ref{eq:diff}) for general $\beta > 0$ Hermite ensembles. Our theorems show that the global asymptotics of (\ref{eq:diff}) for $X_N$ distributed as (\ref{eq:tridiagonal}) depend on $\beta > 0$ only up to a multiplicative factor. This dependence on $\beta > 0$ is typical in the study of global asymptotics of $\beta$-ensembles, e.g. \cite{BOGO} and \cite{GORIN}. We note that the simplicity of our approach is a consequence of the Gaussianity in our model. In contrast, \cite{ALEXEY} and \cite{ERDOS} deal with real and complex Wigner matrices which may be non-Gaussian.

The asymptotics of (\ref{eq:diff}) were studied for a closely related model called the $\beta$-Jacobi ensemble in \cite{GORIN} through a different method using Macdonald difference operators. At the finite level, the $\beta$-Hermite ensemble can be realized as a degeneration of the $\beta$-Jacobi ensemble. Thus the limits obtained for the $\beta$-Hermite ensemble can be viewed as degenerations of the limits obtained for the $\beta$-Jacobi ensemble. However, this connection should be viewed as informal because a rigorous proof that the limit commutes with this degeneration requires more work and does not appear in the literature.

A similar model is studied in \cite{SODIN} where $\{\lambda_i^{(N-1)}\}_{i=1}^{N-1}$ are taken to be the critical points of the characteristic polynomial. The resulting difference of empirical measures also converges after proper rescaling to the Vershik-Kerov-Logan-Shepp curve. See \cite{SODIN} for comparison of the fluctuations between these two models.

The paper is organized as follows. We first provide preliminary notions and state the main results in Section \ref{setup}. Next, the proofs of the results are provided in Section \ref{proofs}. Finally, we interpret the results and provide the connection with the derivative of the GFF in Section \ref{GFF}.

\textbf{Acknowledgments.} We would like to thank Vadim Gorin for suggesting and providing direction for this project. This material is based upon work done through the PRIMES-USA program, supported by the National Science Foundation under Grant no. DMS-1519580.

\section{Preliminary Notions and Main Results}
\label{setup}

Let $\{x_i\}$ and $\{y_i\}$ be two \textit{interlacing} sequences of real numbers, i.e.
\[x_1\le y_1\le x_2\le\cdots\le x_{N-1}\le y_{N-1}\le x_N.\]

Define $w^{\{x_i\},\{y_i\}}(x)$ to be the \textit{rectangular Young diagram} of $\{x_i\}$ and $\{y_i\}$ in the following way.

Let $z_0=\sum_{i=1}^n x_i-\sum_{i=1}^{n-1}y_i$. Then, $w^{\{x_i\},\{y_i\}}(x)$ is the unique continuous function with the following properties.
\begin{itemize}
    \item $w^{\{x_i\},\{y_i\}}(x)=|x-z_0|$ for $x\le x_1$ and $x\ge x_N$.
    \item $\frac{d}{dx}w^{\{x_i\},\{y_i\}}(x) = 1$ for $x_i< x < y_i$ and $\frac{d}{dx}w^{\{x_i\},\{y_i\}}(x)=-1$ for $y_i<x<x_{i+1}$.
\end{itemize}

\begin{figure}[ht]
    \centering
    \includegraphics[scale=0.4]{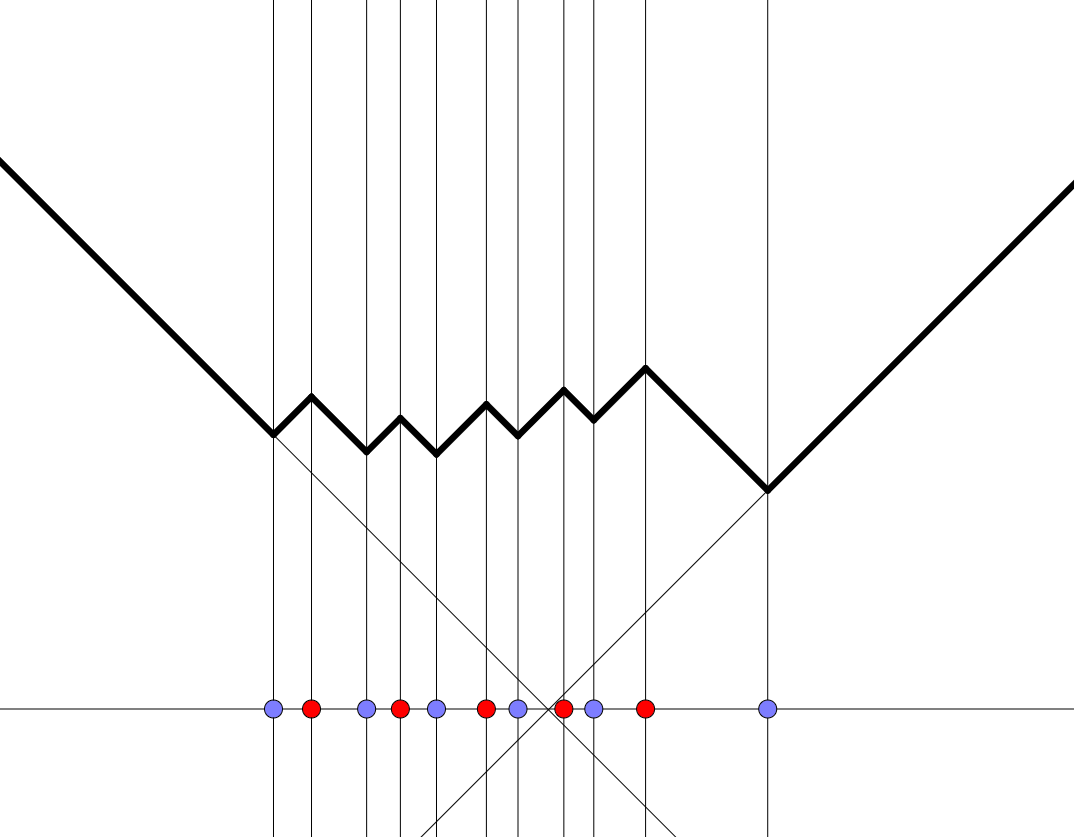}
    \caption{Here the blue points are the $x_i$ and the red points are the $y_i$.}
    \label{fig:Young}
\end{figure}
Let $A$ be an arbitrary $N$ by $N$ symmetric matrix, and let $\widehat{A}$ be its lower right $N-1$ by $N-1$ submatrix. Then, Cauchy's interlacing theorem states that the eigenvalues of $A$ and $\widehat{A}$ interlace, so we can assign a rectangular Young diagram to $A$ as in Figure \ref{fig:Young}.

Let $X_N$ be a symmetric random matrix with the distribution defined in (\ref{eq:tridiagonal}). This is the tridiagonal $\beta$-Hermite ensemble of \cite{EDELMAN}. Given $M \leq N$, the lower right $M\times M$ principal submatrix of $X_N$ is distributed as $X_M$. To preserve this dependence, when we say $X_M$ we are referring to the lower right principal submatrix $M\times M$ of $X_N$. Define $Y_M=\sqrt{\frac{2}{M\beta}}X_M$ to be a rescaling of $X_M$.

Let $w_M(x)$ be the rectangular Young diagram associated with the eigenvalues of $Y_M$ and $\widehat{Y}_M$. Consider also the random measure $\mu_M = \sum_{i=1}^M \delta_{\lambda_i^{(M)}} - \sum_{j=1}^{M-1} \delta_{\rho_j^{(M)}}$ where $\lambda_i^{(M)}$ is the $i$th eigenvalue (in some order) of $Y_M$ and $\rho_j^{(M)}$ is the $j$th eigenvalue (in some order) of $\widehat{Y}_M$. The random Young diagram is related to $\mu_M$ in the following manner
\begin{equation} \label{eq:wmu}
\frac{1}{2} (w_M(x) - |x-z_0|)'' = \mu_M - \delta_{z_0}.
\end{equation}
Let $D_{M,k}$ be the $k$th moment of $\mu_M$, or
\[D_{M,k}=\int x^k\, \mu_M(dx) = \tr Y_M^k - \tr \widehat{Y}_M^k.\]

The asymptotics of the measure $\mu_N$ are the primary focus of this article. Due to the relation above, this implies information about the convergence of the random Young diagrams. We present the results below.

\subsection{Law of Large Numbers}

\begin{theorem}[Law of Large Numbers] \label{LLN}
\[D_{N,k} \to \left\{ \begin{array}{cl} \binom{k}{k/2} & \text{if $k$ even} \\ 0 & \text{if $k$ odd} \end{array} \right.\]
in probability as $N\to\infty$.
\end{theorem}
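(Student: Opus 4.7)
The plan is to expand $D_{N,k} = \tr Y_N^k - \tr \widehat{Y}_N^k$ combinatorially using the tridiagonal structure. Since $Y_N$ is tridiagonal,
\[
\tr Y_N^k = \sum_{\gamma} \prod_{t=0}^{k-1} (Y_N)_{\gamma(t),\, \gamma(t+1)},
\]
where $\gamma$ ranges over closed walks of length $k$ on $\{1, \ldots, N\}$ whose consecutive vertices differ by at most $1$. The contribution from $\tr \widehat{Y}_N^k$ cancels precisely the walks avoiding vertex $1$, so $D_{N,k}$ is the same sum restricted to walks that visit vertex $1$. I would compute $\E[D_{N,k}]$ from this walk expansion and then bound $\var(D_{N,k}) \to 0$ to conclude convergence in probability.

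For $\E[D_{N,k}]$, each walk $\gamma$ has some number $s$ of stay-steps (each contributing a factor $(Y_N)_{ii}$) and some number of up/down steps (each paired traversal of an edge contributing $(Y_N)_{i,i+1}^2$). In any closed walk, each edge is crossed equally often in both directions, so off-diagonal entries automatically appear an even number of times. By independence of the tridiagonal entries and the fact that $(Y_N)_{ii}$ is mean-zero Gaussian, the expected walk weight vanishes unless every vertex accumulates an even number of stay-steps. For $k$ odd the total number $s$ is odd, so some vertex fails this parity and $\E[D_{N,k}] = 0$. For $k$ even, walks with any stay-steps contribute $O(N^{-s/2})$ since $\E[(Y_N)_{ii}^{2\ell}] = O(N^{-\ell})$, and only finitely many walks of length $k$ visit vertex $1$, so these are negligible in the limit.

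The dominant contribution for $k = 2m$ even thus comes from pure $\pm 1$ closed walks of length $2m$ visiting vertex $1$. For such a walk, using $(Y_N)_{i,i+1} = \chi_{(N-i)\beta}/\sqrt{N\beta}$ and the identity $\E[\chi_a^{2u}] = \prod_{j=0}^{u-1}(a + 2j)$, the expected weight is $\prod_e \prod_{j=0}^{u_e - 1} \frac{(N - i_e)\beta + 2j}{N\beta}$, which tends to $1$ as $N \to \infty$ uniformly across walks (walks of length $k$ touching vertex $1$ are confined to $\{1, \ldots, k+1\}$, so $i_e$ is $O(1)$). Therefore $\E[D_{N,2m}]$ converges to the number of closed $\pm 1$ walks of length $2m$ on $\Z_{\geq 1}$ visiting vertex $1$, summed over starting vertex. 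I would identify this count with $\binom{2m}{m}$ via the bijection sending a walk $\gamma$ on $\Z_{\geq 1}$ from $i$ to $i$ visiting $1$ to the shifted walk $\gamma - i$ on $\Z$ from $0$ to $0$; the inverse is $\delta \mapsto \delta + 1 - \min(\delta)$, and every closed walk on $\Z$ from $0$ to $0$ has minimum $\leq 0$, so this is a bijection, yielding the central binomial coefficient.

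Finally, for convergence in probability, I would bound $\var(D_{N,k}) = O(1/N)$ by a parallel walk expansion of $\E[D_{N,k}^2]$. Because $D_{N,k}$ is a sum of $O(1)$ walk weights and each off-diagonal factor $(Y_N)_{i,i+1}^2 = \chi_{(N-i)\beta}^2/(N\beta)$ has relative variance $O(1/N)$ (by standard concentration of $\chi^2$), each walk weight concentrates and the variance decays. The most conceptually delicate ingredient is the combinatorial bijection identifying the walk count with $\binom{2m}{m}$; the remaining work is careful moment bookkeeping and control of the negligible contributions from walks with stay-steps and higher-order corrections to the $\chi$-moments.
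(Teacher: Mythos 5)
Your proposal is correct and takes essentially the same route as the paper: the same tridiagonal walk expansion of $D_{N,k}$ over closed walks through vertex $1$, the same parity argument killing the odd case, the same $O(N^{-s/2})$ estimate discarding walks with diagonal steps, the same $\chi$-moment limit giving each pure $\pm1$ walk weight $1$, the translation bijection yielding $\binom{k}{k/2}$, and Chebyshev with $\var(D_{N,k})=O(1/N)$. The only difference is cosmetic: your bijection is stated via the minimum of the walk rather than its starting point, and your variance sketch is carried out inline where the paper defers it to the CLT section.
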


Through (\ref{eq:wmu}), the preceding result gives information about the asymptotics of the random rectangular Young diagrams $w_N$. Let
\[\Omega(x) = \left\{
\begin{array}{cc}
    \frac{2}{\pi}(x\arcsin(\tfrac{x}{2}) + \sqrt{4-x^2}), & |x|\le 2 \\
    |x|, & |x|\ge 2
\end{array}
\right.\]
be the Vershik-Kerov-Logan-Shepp curve.

\begin{corollary} \label{dLLN} 
Let $w_N(x)$ be the random Young diagram associated to the eigenvalues of $Y_N$ and $\widehat{Y}_N$, as defined in Section \ref{setup}. Then $w_N(x) \to \Omega(x)$ as $N\to\infty$ uniformly in probability.
\end{corollary}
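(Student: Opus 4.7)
The plan is to deduce uniform convergence of $w_N$ to $\Omega$ from the moment convergence in Theorem \ref{LLN} by ``integrating twice''. The starting observation is that the limiting moments $\binom{k}{k/2}$ (for even $k$) are exactly the moments of the arcsine density $\frac{1}{\pi\sqrt{4-y^2}} \mathbf{1}_{|y|<2}$ on $[-2,2]$, and a direct computation shows $\frac{1}{2}(\Omega(x) - |x|)'' = \frac{\mathbf{1}_{|x|<2}}{\pi\sqrt{4-x^2}} - \delta_0$, matching the form of (\ref{eq:wmu}) with $z_0 \to 0$ (which holds in probability since $z_0 = D_{N,1}$ and Theorem \ref{LLN} gives $D_{N,1} \to 0$). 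So morally Theorem \ref{LLN} provides moment convergence of $\mu_N$ to the arcsine law, and the corollary amounts to lifting this to uniform convergence of the twice-antiderivatives.

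Write $h_N(x) = w_N(x) - |x - z_0|$ and $h_\infty(x) = \Omega(x) - |x|$. Then $h_N$ is non-negative (as verified directly from the definition of the rectangular Young diagram), $2$-Lipschitz, and compactly supported in $[\lambda_N^{(N)}, \lambda_1^{(N)}]$, with $h_N'' = 2(\mu_N - \delta_{z_0})$ in the sense of distributions. Since $\sup_x \bigl||x-z_0| - |x|\bigr| \le |z_0| \to 0$ in probability, it suffices to prove $h_N \to h_\infty$ uniformly in probability. Integrating by parts twice against the test function $y^{k+2}$ gives
\[
\int y^k h_N(y)\, dy = \frac{2\bigl(D_{N,k+2} - z_0^{k+2}\bigr)}{(k+2)(k+1)}
\]
for every $k \ge 0$, and by Theorem \ref{LLN} the right-hand side converges in probability to the corresponding $k$-th moment of $h_\infty$.

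To upgrade moment convergence of $h_N$ to uniform convergence, I would combine a tail bound with an Arzelà--Ascoli/subsequence argument. A Chebyshev-type estimate gives $\int_{|y|>R} h_N(y)\, dy \le R^{-2} \int y^2 h_N(y)\, dy$, which is bounded in probability by the moment convergence above. Since $h_N \ge 0$ is $2$-Lipschitz, the triangular bump of height $M = h_N(y_\ast)$ centered at $y_\ast$ lies below $h_N$ and contributes area at least $M^2/4$ to $\int_{|y|>R} h_N$ whenever $|y_\ast|>R$, so $\sup_{|y|>R} h_N = O(1/R)$ in probability, uniformly in $N$. On the fixed compact $[-R, R]$ the functions $h_N$ lie in the compact subset of $C([-R,R])$ consisting of non-negative $2$-Lipschitz functions bounded by $2R$ (Arzelà--Ascoli), so any subsequence has a further subsequence converging in distribution; the distributional limit has the same polynomial moments as $h_\infty$ almost surely, hence equals $h_\infty$ by Weierstrass approximation. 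This gives $h_N \to h_\infty$ uniformly on $[-R,R]$ in probability, which combined with the tail bound completes the proof.

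The main obstacle I anticipate is the total-variation blow-up of $\mu_N$: a direct attack via polynomial approximation of $|x-y|$ tested against $\mu_N$ fails because $|\mu_N|(\mathbb{R})$ grows linearly in $N$, so approximation errors cannot be absorbed into a $\|f\|_\infty \cdot |\mu_N|$-type bound. The workaround above is to defer the approximation step to the twice-antiderivative $h_N$, which is tame (non-negative, Lipschitz, tail-decaying by moment control), so that moment uniqueness via Weierstrass translates cleanly into uniform convergence.
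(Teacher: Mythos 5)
Your overall strategy is the same as the paper's: pass from Theorem \ref{LLN} to the twice-antiderivative $h_N = w_N - |x-z_0|$ via the relation (\ref{eq:wmu}), kill the centering term using $z_0 = D_{N,1} \to 0$, and upgrade moment convergence of the non-negative $2$-Lipschitz functions $h_N$ to uniform convergence. The paper outsources the last step to Lemma 2.1 of \cite{ALEXEY} (equivalence of the moment and uniform topologies on Lipschitz functions supported in a \emph{fixed} interval $[a,b]$) and secures the fixed support by invoking the almost sure convergence of the extreme eigenvalues of $Y_N$ to $\pm 2$; you instead re-derive the topology equivalence by hand (Arzel\`a--Ascoli plus moment uniqueness) and try to control the support purely through a second-moment Chebyshev bound. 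The integration-by-parts identity, the non-negativity and Lipschitz claims for $h_N$, and the tail estimate $\sup_{|y|>R} h_N \le 2\bigl(\int_{|y|>R} h_N\bigr)^{1/2} = O(R^{-1})$ are all correct.

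The gap is in the final Weierstrass step. Your subsequential limit $H$ lives on $[-R,R]$, and its moments are the limits of $\int_{-R}^{R} y^k h_N\,dy$, not of $\int_{\R} y^k h_N\,dy$. These differ by $\int_{|y|>R} |y|^k h_N\,dy$, which your second-moment bound only controls by $R^{-2}\int |y|^{k+2} h_N\,dy$ --- a quantity that is merely $O(R^{-2})$ with a $k$-dependent constant, not $o(1)$ for fixed $R$. So you have only shown that each moment of $H$ lies within $O(R^{-2})$ of the corresponding moment of $h_\infty$, and the conclusion ``$H = h_\infty$ by Weierstrass'' does not follow; letting $R\to\infty$ does not obviously repair this, since the per-moment error constants grow with $k$ and the limit object would then no longer be supported in a fixed compact set, so one needs moment determinacy (Carleman) rather than Weierstrass on an interval. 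Two clean fixes: (a) iterate Chebyshev with arbitrarily high moments --- since the limiting moments $m_{2j} = \frac{2}{(2j+2)(2j+1)}\binom{2j+2}{j+1}$ satisfy $m_{2j}^{1/(2j)} \to 2$, you get $\int_{|y|>2+\e}|y|^k h_N \,dy\to 0$ in probability for every fixed $k$ and $\e>0$, which confines everything to $[-2-\e,2+\e]$ and makes the restricted moments converge exactly; or (b) do what the paper does and cite the concentration of the spectrum of $Y_N$ in $[-2-\e,2+\e]$ with probability tending to one, which puts the support of $h_N$ (up to the vanishing $z_0$ shift) in a fixed compact set from the start.
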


\subsection{Central Limit Theorem}

\begin{theorem}[Central Limit Theorem]\label{CLT}
Let $0 < \alpha_1,\ldots,\alpha_\ell \le 1$. The vector
\[ \left(\sqrt{\lfloor \alpha_i N \rfloor} ( D_{\lfloor N \alpha_i \rfloor ,k_i} - \E D_{\lfloor N \alpha_i \rfloor ,k_i} ) \right)_{i=1}^\ell \]
converges to a centered Gaussian vector $(\mathfrak{D}_{\alpha_i,k_i})_{i=1}^\ell$. The covariance structure is given by
\[ \cov(\mathfrak{D}_{\alpha_i,k_i}, \mathfrak{D}_{\alpha_j,k_j}) = \mathbf{1}_{\alpha_i = \alpha_j}\mathbf{1}_{2\mid k_i+k_j}\frac{4}{\beta}\frac{k_ik_j}{k_i+k_j} \binom{k_i + k_j - 2}{\frac{k_i+k_j-2}{2}}.  \]
\end{theorem}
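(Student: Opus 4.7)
I would use the method of moments: show that all joint mixed moments of the rescaled centered variables $\sqrt{M_i}(D_{M_i, k_i} - \E D_{M_i, k_i})$ converge to the corresponding Gaussian moments, from which joint convergence in distribution to a Gaussian vector follows. Since $Y_M$ is tridiagonal, $\tr Y_M^k$ expands as a sum over closed walks of length $k$ on the path $\{1,\ldots,M\}$, with each step being $\pm 1$ (weight an off-diagonal chi entry) or $0$ (weight a diagonal Gaussian entry); hence $D_{M,k} = \tr Y_M^k - \tr \widehat{Y}_M^k$ is precisely the sum over such walks that visit the top vertex $1$. After centering, the mixed moment
\[ \E \prod_{i=1}^\ell \bigl[\sqrt{M_i}(D_{M_i,k_i} - \E D_{M_i,k_i})\bigr]^{p_i} \]
expands as a sum over tuples of walks whose expectation factors over the independent matrix entries via the moments of chi and Gaussian distributions, and centering forces every random entry in the tuple to contribute at least twice (in the cumulant sense) for a nonzero term.

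For $\alpha_i \ne \alpha_j$, walks contributing to $D_{M_i,\cdot}$ remain within a bounded neighborhood of row $N - M_i + 1$ of the ambient matrix $X_N$ (where vertex $1$ of $Y_{M_i}$ sits), and similarly for the index $j$; since these rows are separated by $\Theta(N)$ while walk lengths are fixed, the walks eventually use disjoint entries, so the two families decouple and the limiting covariance vanishes. For $\alpha_i = \alpha_j = \alpha$, write $M = \lfloor \alpha N \rfloor$. The leading contribution to $M \cdot \cov(D_{M,k_i}, D_{M,k_j})$ comes from pairs of walks sharing exactly one random entry, because each chi off-diagonal and centered Gaussian diagonal entry of $Y_M$ has variance of order $1/M$ (namely $\var((Y_M)_{t,t+1}^2) \sim 2/(M\beta)$ and $\var((Y_M)_{vv}) = 2/(M\beta)$). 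Splicing the paired walks at the shared entry produces a closed walk from vertex $1$ of total length $k_i + k_j - 2$; the cyclic freedom to position the shared entry within each original walk contributes the combinatorial factor $k_i k_j/(k_i + k_j)$, enumeration of the spliced walks yields the central binomial $\binom{k_i + k_j - 2}{(k_i+k_j-2)/2}$, and the variance factors combine to give $4/\beta$. One can cross-check at $k_i = k_j = 1$ (where $D_{M,1} = (Y_M)_{11}$ gives $M \cdot \var = 2/\beta$, matching $(4/\beta)(1/2)(1)$) and at $k_i = k_j = 2$ (where $D_{M,2} = (Y_M)_{11}^2 + 2(Y_M)_{12}^2$ gives $M \cdot \var \to 8/\beta$, matching $(4/\beta)(1)(2)$). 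The parity $2 \mid k_i + k_j$ emerges because the centered Gaussian diagonal entries have vanishing odd moments, so the spliced walk contributes only when its length $k_i + k_j - 2$ is even.

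For higher joint moments, tuples of walks in which three or more walks meet at a common shared entry contribute at strictly smaller order in $1/M$ than pairwise overlaps; thus only pair-partition configurations survive at leading order, reproducing the Wick formula for products of pairwise covariances and confirming joint Gaussianity. The main obstacle I anticipate is the combinatorial bookkeeping required to (i) uniformly control the subleading tuples so that pair partitions truly dominate, and (ii) precisely match the count of spliced paired walks from vertex $1$ to the expression $\frac{4}{\beta}\frac{k_i k_j}{k_i + k_j}\binom{k_i + k_j - 2}{(k_i + k_j - 2)/2}$ — this amounts to recognizing such walks, after an appropriate reflection, as a class of Motzkin/Dyck-style paths enumerated by the central binomial coefficient, while carefully accounting for the mix of diagonal and off-diagonal steps.
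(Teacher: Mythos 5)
Your plan is correct and rests on the same foundation as the paper's proof: the tridiagonal walk expansion $D_{M,k}=\sum_{\vec{i}}T_{\vec{i}}$ over closed length-$k$ walks through vertex $1$, the observation that $D_{\lfloor\alpha N\rfloor,k}$ depends only on finitely many entries near row $N-\lfloor\alpha N\rfloor+1$ of $X_N$ (so distinct $\alpha$'s decouple and only the case $\alpha_1=\cdots=\alpha_\ell$ needs work), and the splicing of two walks at a shared edge into a single Dyck-type path, which is exactly how the paper's Lemma \ref{combo} produces $\frac{2k_ik_j}{k_i+k_j}\binom{k_i+k_j-2}{(k_i+k_j-2)/2}=k_ik_jC_{(k_i+k_j-2)/2}$ via a cyclic-rotation count plus a gluing bijection. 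Where you genuinely diverge is in how Gaussianity is obtained: you propose to verify Wick's formula for all joint moments by arguing that only pair partitions of walks survive, whereas the paper linearizes first --- it shows that the lowest $\N$-degree part of $\sqrt{N}(D_{N,k}-\E D_{N,k})$ is exactly the linear form $\sum_h\sigma_{k,h}\zeta_{k,h}$ in the independent, asymptotically Gaussian centered entries $\sqrt{N}Y_N(h,h)$ and $\sqrt{N}(Y_N(h,h+1)^2-1)$, so joint Gaussianity is automatic and only the covariance requires combinatorics. The paper's route dissolves your anticipated obstacle (i), since no uniform control of subleading tuples is ever needed. One caution on your version: the claim that configurations where three or more walks meet at a common entry are of strictly smaller order is false as stated --- four walks all traversing the edge $(h,h+1)$ contribute at order $O(1)$ through $N^2\,\E\bigl[(Y_N(h,h+1)^2-1)^4\bigr]\to 3(2/\beta)^2$, which is not negligible but is precisely the Gaussian fourth moment of the shared entry; what vanishes in the limit are the joint cumulants of order at least three, so your argument must be run at the cumulant level (as you hint parenthetically) to close. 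Your parity explanation and the sanity checks at $k=1,2$ are correct.
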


We may recast Theorem \ref{CLT} in terms of fluctuations of the measure $\mu_N$. Define $\phi_N$ to be the fluctuation of $\mu_N$ given by
\[ \Phi_{N,f} := \int f(x) \phi_N(dx) = \sqrt{N} \left( \int f(x) \mu_N(dx) - \E \int f(x) \mu_N(dx) \right). \]
\begin{theorem} \label{intvar}
Let $\mathcal{P}$ be the vector space of real coefficient polynomials. Then $\{\Phi_{N,f}\}_{f \in \mathcal{P}}$ converge jointly to a centered Gaussian family $\{\Phi_f\}_{f\in\mathcal{P}}$ defined by
\[ \cov\left( \Phi_f , \Phi_g \right) = \frac{2}{\beta} \int_{-2}^2 f'(x)g'(x) \sigma(x) \, dx \]
where $\sigma(x)$ is the semicircle law.
\end{theorem}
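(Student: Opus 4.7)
The plan is to deduce Theorem \ref{intvar} directly from Theorem \ref{CLT} applied at the scale $\alpha_i = 1$, by reducing to monomial test functions and then matching the two covariance expressions via an explicit computation with the moments of the semicircle law.

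First, the map $f \mapsto \Phi_{N,f}$ is linear in $f$, and $\mathcal{P}$ is spanned by the monomials $\{x^k\}_{k\ge 0}$. It therefore suffices to establish joint convergence for finite collections of monomial fluctuations $\Phi_{N,x^k} = \sqrt{N}\bigl(D_{N,k} - \E D_{N,k}\bigr)$. Joint convergence for arbitrary polynomial test functions $f_1,\ldots,f_\ell \in \mathcal{P}$ then follows: writing each $f_j = \sum_k c_{j,k} x^k$ as a finite linear combination, the vector $(\Phi_{N,f_j})_{j=1}^\ell$ is obtained from a larger collection $(\Phi_{N,x^k})_k$ by a fixed linear map, and the continuous mapping theorem gives joint convergence to the corresponding linear image of the Gaussian limits $\mathfrak{D}_{1,k}$, which is again centered Gaussian. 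This also defines the limiting family $\{\Phi_f\}_{f\in\mathcal{P}}$ consistently with $\Phi_{x^k} = \mathfrak{D}_{1,k}$.

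It remains to verify that the covariance structure given by Theorem \ref{CLT} at $\alpha_i = \alpha_j = 1$ agrees with the integral formula of Theorem \ref{intvar} on monomials. Taking $f(x) = x^{k_i}$ and $g(x) = x^{k_j}$,
\[ \frac{2}{\beta} \int_{-2}^2 f'(x)g'(x)\,\sigma(x)\,dx = \frac{2 k_i k_j}{\beta} \int_{-2}^2 x^{k_i + k_j - 2}\,\sigma(x)\,dx. \]
The semicircle density $\sigma(x) = \tfrac{1}{2\pi}\sqrt{4-x^2}$ on $[-2,2]$ has vanishing odd moments and even moments $\int_{-2}^2 x^{2m}\,\sigma(x)\,dx = C_m = \tfrac{1}{m+1}\binom{2m}{m}$. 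When $k_i + k_j$ is odd, both sides vanish. When $k_i + k_j = 2m$ is even,
\[ \frac{2 k_i k_j}{\beta}\, C_{m-1} \;=\; \frac{2 k_i k_j}{\beta m}\binom{2m-2}{m-1} \;=\; \frac{4}{\beta}\,\frac{k_i k_j}{k_i + k_j}\binom{k_i+k_j-2}{(k_i+k_j-2)/2}, \]
which matches the expression $\cov(\mathfrak{D}_{1,k_i},\mathfrak{D}_{1,k_j})$ from Theorem \ref{CLT}.

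Since Theorem \ref{intvar} is essentially a reformulation of Theorem \ref{CLT} (with all scales set to $1$) against polynomial test functions, I do not expect any substantive obstacle. The only content is the algebraic identification of the Catalan-number factor $\tfrac{1}{m}\binom{2m-2}{m-1}$ with the even moments of $\sigma$, together with the routine linearity argument that promotes convergence on monomials to joint convergence on all of $\mathcal{P}$.
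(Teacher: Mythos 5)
Your proposal is correct and follows essentially the same route as the paper: deduce the result from Theorem \ref{CLT} at scale $1$, reduce to monomials by bilinearity, and match the covariance via the Catalan-number moments of the semicircle law. The only difference is that you spell out the linearity/continuous-mapping step and the Catalan computation more explicitly than the paper does.
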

This covariance structure can be identified with the derivative of the GFF. We leave the discussion of this identification for Section \ref{GFF}.

\section{Proofs of Results}
\label{proofs}

We set up some notation before presenting the proofs. We deal with two types of paths denoted by $\vec{i} = (i_1,\ldots,i_k)$. In one case, we will think of the indices as living in $\Z/k\Z$, that is $\vec{i} \in [N]^{\Z/k\Z}$. Later, we consider paths where the indices are in $\Z$ instead. Define
\[ \Lambda_k = \{\vec{i} = (i_1,\ldots,i_k) \in [N]^{\Z/k\Z}: |i_j - i_{j+1}| \leq 1 ~\mbox{and}~ 1 \in \vec{i} \} \]
Also let
\begin{align*}
\ell_{\vec{i}}(h) &:= \mbox{ the number of times that $\vec{i}$ hits $(h,h+1)$}, \\
m_{\vec{i}}(h) &:= \mbox{ the number of times that $\vec{i}$ hits $(h,h)$}.
\end{align*}

\subsection{Proof of Theorem \ref{LLN}}

The proof of Theorem \ref{LLN} relies on the following lemma which considers first the convergence of the expectations.

\begin{lemma}
\label{lem1}
We have that
\[\lim_{N\to\infty}\E D_{N,k} = \left\{ \begin{array}{cl} \binom{k}{k/2} & \text{if $k$ even} \\ 0 & \text{if $k$ odd.} \end{array} \right.\]
\end{lemma}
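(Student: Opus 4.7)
The plan is a straightforward method of moments exploiting the tridiagonal trace expansion. Expanding $\E D_{N,k} = \E \tr Y_N^k - \E \tr \widehat Y_N^k$ using the band structure and noting that $\widehat Y_N$ is obtained by deleting vertex $1$, only closed nearest-neighbor walks in $[N]$ that visit vertex $1$ survive the cancellation, yielding
\[ \E D_{N,k} = \sum_{\vec{i} \in \Lambda_k} \E \prod_{j=1}^k (Y_N)_{i_j, i_{j+1}}. \]
Because entries in different matrix positions are independent, each expectation factors across vertices into diagonal contributions $\E[(Y_N)_{h,h}^{m_{\vec{i}}(h)}]$ and edge contributions $\E[(Y_N)_{h,h+1}^{\ell_{\vec{i}}(h)}]$, which I would evaluate using the vanishing of odd Gaussian moments and the chi-square identity $\E[\chi_a^{2\ell}] = a(a+2)\cdots(a+2\ell-2)$.

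The parity case is immediate: any closed walk on a path graph traverses each edge an even number of times, so $\ell_{\vec{i}}(h)$ is automatically even, and together with the vanishing of odd Gaussian moments this forces every $m_{\vec{i}}(h)$ to be even for a nonzero contribution. Hence $k = \sum_h(\ell_{\vec{i}}(h) + m_{\vec{i}}(h))$ must be even, giving $\E D_{N,k} = 0$ identically when $k$ is odd. For $k$ even, the rescaling $Y_N = \sqrt{2/(N\beta)}\, X_N$ yields
\[ \E\bigl[(Y_N)_{h,h+1}^{2\ell}\bigr] = (N\beta)^{-\ell}\prod_{s=0}^{\ell-1}\bigl((N-h)\beta + 2s\bigr) = 1 + O_k(1/N), \qquad \E\bigl[(Y_N)_{h,h}^{2q}\bigr] = O(N^{-q}), \]
uniformly for $h \le k+1$ and $\ell, q \le k/2$. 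Since every walk in $\Lambda_k$ is confined to $[1, k+1]$, the total number of such walks is bounded in $N$, and so any walk with at least one self-loop pair contributes $O_k(1/N)$ in aggregate; the leading contribution therefore comes entirely from pure $\pm 1$-step closed walks, each contributing $1 + O(1/N)$.

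It remains to count pure nearest-neighbor closed walks of length $k$ on $\{1, 2, 3, \ldots\}$ that visit vertex $1$. Parametrizing by the starting vertex $a \ge 1$, this reduces to counting walks from $a$ to $a$ with minimum exactly $1$, and applying the reflection principle at levels $1$ and $0$ gives the count $\binom{k}{k/2+1-a} - \binom{k}{k/2-a}$. Summing over $a \ge 1$ telescopes to $\binom{k}{k/2}$, which matches the claimed limit. The only moderately delicate step in executing this plan is keeping the $O_k(1/N)$ error estimates from the chi moments uniform over the finite family $\Lambda_k$ so that they genuinely aggregate to $o(1)$; the reflection-and-telescope combinatorics for the leading order is then clean.
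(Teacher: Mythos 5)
Your proposal is correct and follows essentially the same route as the paper: the same path expansion over $\Lambda_k$, the same parity and order-of-magnitude analysis isolating the pure $\pm 1$ closed walks through vertex $1$, and the same chi-square moment asymptotics. The only difference is cosmetic: you count $|\D_k|$ by a reflection-principle-plus-telescoping argument over starting vertices, whereas the paper uses a one-line translation bijection with unconstrained closed $\pm 1$ walks; both yield $\binom{k}{k/2}$.
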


\begin{proof}
We have that
\[D_{N,k}=\tr Y_N^k - \tr Y_{N-1}^k = \sum_{\vec{i}\in\Lambda_k}\prod_{j=1}^k Y_N(i_j,i_{j+1})=:\sum_{\vec{i}\in\Lambda_k}T_{\vec{i}}.\]
We see that
\[T_{\vec{i}} = \prod_{j=1}^k Y_N(j,j)^{m_{\vec{i}}(j)}\prod_{j=1}^k Y_N(j,j+1)^{2\ell_{\vec{i}}(j)},\]
so
\[\E T_{\vec{i}} = \prod_{j=1}^k \E Y_N(j,j)^{m_{\vec{i}}(j)}\prod_{j=1}^k \E Y_N(j,j+1)^{2\ell_{\vec{i}}(j)}.\]
Note that if any of the $m_{\vec{i}}(j)$ are odd, then $\E T_{\vec{i}}=0$. For $k$ odd and for any $\vec{i} \in \Lambda_k$ there always exists an odd $m_\vec{i}(j)$, which implies $\E D_{N,k} = 0$. Let us assume $k$ is even. Then the nontrivial contributions are given by paths $\vec{i}$ for which $m_{\vec{i}}(j)$ are all even. The product of the $\E Y_N(j,j)^{m_{\vec{i}}(j)}$ is of order $N^{-\sum_j m_{\vec{i}}(j)/2}$ whereas the product of the $\E Y_N(j,j+1)^{2\ell_{\vec{i}}(j)}$ is of constant order. Thus it suffices to determine the contribution of those paths with all the $m_{\vec{i}}(j) = 0$. Let us call this set of paths
\[ \D_k = \{ \vec{i} \in \Lambda_k: |i_j - i_{j+1}| = 1 \}. \]

Since $(\beta N)^{-1} \chi_{\beta N}^2 \to 1$ almost surely, as $N\to\infty$ we have
\[\E T_{\vec{i}} = \prod_{j=1}^{k/2}\E\left(\frac{\chi_{\beta(N-j)}^2}{\beta N}\right)^{\ell_{\vec{i}}(j)} \to 1.\]
Thus
\[ \E D_{N,k} \to |\D_k| = \binom{k}{k/2}\]
as $N\to\infty$. This is true because there is a bijection between $\D_k$ and $\pm 1$ walks starting and ending at $1$ --- simply translate the path in $\D_k$ so that the path starts at $1$.
\end{proof}

We are now ready to prove Theorem \ref{LLN}.

\begin{proof}[Proof of Theorem \ref{LLN}]
By the Chebyshev inequality, for any $\e > 0$ as $N\to\infty$
\[ \P( | D_{N,k} - \E D_{N,k} | > \e ) \leq \e^{-2} \var(D_{N,k}) \to 0 \]
where we are using the fact that $\var(D_{N,k})=O(1/N)$, which we prove in the next section. This completes the proof.
\end{proof}

Given Theorem \ref{LLN}, we now present the ideas to obtain Corollary \ref{dLLN}. The ideas for this implication were mentioned in \cite{ALEXEY}. We sketch the proof by highlighting the main ideas.

We have the following fact
\begin{equation} \label{eq:lsmom}
\int_{-2}^2 \! \frac{1}{2}(\Omega(x) - |x|)''x^k\, dx = \left\{ \begin{array}{cl}
\binom{k}{k/2} & \mbox{if $k$ is even,} \\
0 & \mbox{if $k$ is odd.}
\end{array} \right.
\end{equation}

Consider also the compactly supported function
\[\kappa_N(x) = \frac{1}{2}\left(w_N(x) - |x-z_0(N)|\right) \]
where $z_0(N) = \sum_{i=1}^N \lambda_i^{(N)} - \sum_{i=1}^{N-1} \lambda_j^{(N-1)}$.

Let $\mathcal{L}([a,b])$ be the space of Lipschitz functions supported on some interval $[a,b] \subset \R$. Let the \textit{topology of moment convergence} on $\mathcal{L}([a,b])$ be the topology on which $f_n \to f$ if and only if
\[ \int f_n(x) x^k \, dx \to \int f(x) x^k\, dx \]
for $k = 0,1,2,\ldots$. We can replace $f_n$ with $f_n''$ and $f$ with $f''$ by integration by parts. From \cite{ALEXEY} (Lemma 2.1), the topology of uniform convergence on $\mathcal{L}([a,b])$ is equivalent to the topology of moment convergence.

With this equivalence of topologies, one may want to show that $\kappa_N \to \frac{1}{2}(\Omega(x) - |x|)$ in the topology of moment convergence in probability. The issue is that the support of $\kappa_N$ may be arbitrarily large. However, this is resolved because for large enough $B$, the probability that $\kappa_N$ is supported in $[-B,B]$ converges to $1$. This statement is implied directly by the following two facts. First, note that the center point of the diagram $z_0(N)$ is the $(1,1)$ entry of $Y_N$, so that $z_0(N) \to 0$ almost surely. Second, the probability that the eigenvalues of $Y_N$ are contained in $[-2-\e, 2+\e]$ (for arbitrary small $\e > 0$) converges to $1$ (e.g. see \cite{GUIONNET}, Chapter 4.5). Thus it suffices to show that
\[ \int \kappa_N'' x^k \, dx \to \int \frac{1}{2}(\Omega(x) - |x|)'' x^k \, dx \]
almost surely for $k = 0,1,2,\ldots$.

To complete the proof sketch, notice that
\[ \kappa_N'' = \sum_{i=1}^N \delta_{\lambda_i^{(N)}} - \sum_{i=1}^{N-1} \delta_{\lambda_i^{(N-1)}} + \delta_{z_0(N)}.\]
Define
\[ \mu_N = \sum_{i=1}^N \delta_{\lambda_i^{(N)}} - \sum_{i=1}^{N-1} \delta_{\lambda_i^{(N-1)}}. \]
Then
\[ \int \left(\kappa_N'' - \mu_N\right) x^k \to 0 \]
almost surely for all $k = 0,1,2,\ldots$, due again to the fact that $z_0(N) \to 0$ almost surely. The reduction is now complete because
\[ \int \mu_N x^k \, dx = D_{N,k}.\]

\subsection{Proof of Theorem \ref{CLT}}

For $k$ odd, let
\[\I_{k,h} = \left\{(i_1,\ldots,i_k) \in \Lambda_k : \exists j_0\text{ such that }(i_{j_0},i_{j_0+1})=(h,h)\text{ and }m_{\vec{i}}(h')=\delta_{h,h'}\right\},\]
let $\sigma_{k,h} = |\I_{k,h}|$, and let
\[\D_k =  \bigcup_{h\ge 1}\I_{k,h}.\]
For $k$ even, let
\[\sigma_{k,h} = \sum_{\vec{i}\in\D_k}\ell_{\vec{i}}(h).\]

\subsubsection{Preliminary Asymptotics} \label{pre}

\begin{lemma} \label{chilemma}
Let $A_N=Y_N(j,j+1)$ for some fixed nonnegative integer $j$. Then as $N\to\infty$
\begin{itemize}
    \item[(i)] $A_N \to 1$ in distribution,
    \item[(ii)] $\sqrt{N}(A_N^2 - 1)\to \mathcal{N}(0, \frac{2}{\beta})$ in distribution.
\end{itemize}
\end{lemma}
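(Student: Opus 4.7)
The plan is to identify $A_N$ explicitly from the tridiagonal form (\ref{eq:tridiagonal}) and then reduce (i) and (ii) to standard moment asymptotics and a CLT for the associated chi-squared variable. Reading off the $(j,j+1)$ entry of $X_N$ and using $Y_N=\sqrt{2/(N\beta)}\,X_N$ gives
\[
A_N = \frac{1}{\sqrt{N\beta}}\,\chi_{(N-j)\beta},\qquad A_N^2 = \frac{\chi_{(N-j)\beta}^2}{N\beta}.
\]
Since $\chi_k^2\sim\mathrm{Gamma}(k/2,2)$ has mean $k$ and variance $2k$, I would compute $\E A_N^2 = (N-j)/N \to 1$ and $\var(A_N^2) = 2(N-j)/(N^2\beta) = O(1/N)$, so $A_N^2\to 1$ in $L^2$ and hence in probability. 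Because $A_N\ge 0$, the continuous mapping theorem then yields $A_N\to 1$ in probability, which is (i).

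For (ii), the natural decomposition is
\[
\sqrt{N}\,(A_N^2-1) = \frac{1}{\sqrt{N}\,\beta}\bigl(\chi_{(N-j)\beta}^2 - (N-j)\beta\bigr) - \frac{j}{\sqrt{N}}.
\]
The deterministic tail $j/\sqrt{N}\to 0$, so the problem reduces to the asymptotic distribution of the centered chi-squared. With $k_N=(N-j)\beta\to\infty$, I would invoke the CLT for Gamma variables,
\[
\frac{\chi_{k_N}^2 - k_N}{\sqrt{2k_N}} \xrightarrow{d} \mathcal{N}(0,1),
\]
together with $\sqrt{2k_N}/(\sqrt{N}\,\beta)\to\sqrt{2/\beta}$. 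Slutsky then delivers $\sqrt{N}(A_N^2-1)\to \mathcal{N}(0,2/\beta)$, proving (ii).

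The one place that needs a brief argument is the Gamma CLT itself, since $\beta$ is an arbitrary positive real and $\chi_{k_N}^2$ need not literally be a sum of i.i.d.\ $\chi_1^2$'s when $\beta\notin\NN$. I would bypass this via characteristic functions: $\phi_{\chi_k^2}(t) = (1-2it)^{-k/2}$, and the log-expansion $\log(1 - it\sqrt{2/k}) = -it\sqrt{2/k} + t^2/k + O(k^{-3/2})$ gives $\log\phi_{(\chi_k^2-k)/\sqrt{2k}}(t) \to -t^2/2$ as $k\to\infty$. Beyond this small bookkeeping step, the proof is entirely routine; I do not expect any real obstacle.
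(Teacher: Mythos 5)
Your proof is correct and rests on the same core computation as the paper's: the log-expansion of the chi-squared characteristic function $(1-2\iota t/\cdot)^{-k/2}$, which the paper applies directly to $\sqrt{N}(A_N^2-1)$ and you apply to the standardized Gamma variable before invoking Slutsky. The differences (your explicit $L^2$ argument for (i) versus the paper's "(ii) implies (i) by Chebyshev," and your decomposition isolating the $-j/\sqrt{N}$ tail) are just bookkeeping; the substance is identical.
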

\begin{proof}
Let $\iota$ denote the imaginary unit. By Chebyshev's inequality, (ii) implies (i). The characteristic function for $A_N^2 - 1$ is given by
\[ \varphi_N(t) = \left(1 - \frac{2\iota t}{\beta N} \right)^{-\frac{1}{2} \beta (N-j)} e^{-\iota t}. \]
Then
\begin{align*}
\log \varphi_N(\sqrt{N} t) &= - \frac{1}{2} \beta (N-j) \log \left( 1 - \frac{2\iota t}{\beta \sqrt{N}} \right) - \iota \sqrt{N} t \\
&= \frac{1}{2} \beta N \left( \frac{2\iota t}{\beta \sqrt{N}} - \frac{2 t^2}{\beta^2 N} \right) - \iota \sqrt{N} t + O(N^{-1/2}) \\
&= - \frac{1}{\beta} t^2 + O(N^{-1/2}). 
\end{align*}
\end{proof}

\begin{lemma}
\label{asymp}
Let $\{(A_{N,1},\ldots,A_{N,m},B_{N,1},\ldots,B_{N,n})\}_{N=1}^\infty$ be a sequence in $N$ of random vectors with independent components. We also have $A_i \sim \frac{1}{\sqrt{\beta N}} \chi_{\beta(N-i)}$ and $B_j \sim \mathcal{N}(0,\frac{2}{\beta N})$. Let $\eta_1,\ldots,\eta_m$ and $\xi_1,\ldots,\xi_n$ be independent centered normal random variables with variance $\frac{2}{\beta}$. Then
\[ (A_{N,1},\ldots,A_{N,m},\sqrt{N}(A_{N,1}^2 - 1),\ldots,\sqrt{N}(A_{N,m}^2 - 1), \sqrt{N} B_{N,1},\ldots,\sqrt{N} B_{N,n}) \to (1,\ldots,1,\eta_1,\ldots,\eta_m,\xi_1,\ldots,\xi_n) \]
in distribution as $N\to\infty$. In particular,
\[ N^{\frac{1}{2}\left( \sum_{i=1}^m q_i + \sum_{j=1}^n r_j \right)} \E \prod_{i=1}^m A_{N,i}^{p_i} (A_{N,i}^2 - 1)^{q_i} \prod_{j=1}^n B_{N,j}^{r_j} \to \prod_{i=1}^m \E \eta_i^{q_i} \prod_{j=1}^n \E \xi_j^{r_j}. \]
\end{lemma}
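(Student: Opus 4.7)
The plan is to establish joint convergence in distribution first, and then upgrade to the stated convergence of expectations via independence and a uniform integrability argument.

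For joint convergence, the key observation is that $A_{N,1},\ldots,A_{N,m},B_{N,1},\ldots,B_{N,n}$ are mutually independent by hypothesis, so the joint law of the whole $(2m+n)$-dimensional vector factors into independent blocks: the pairs $(A_{N,i},\sqrt{N}(A_{N,i}^2-1))$ for $i=1,\ldots,m$, and the scalars $\sqrt{N}B_{N,j}$ for $j=1,\ldots,n$. It therefore suffices to prove convergence of each block separately. For each chi block, Lemma \ref{chilemma}(i) yields $A_{N,i}\to 1$ in probability and Lemma \ref{chilemma}(ii) yields $\sqrt{N}(A_{N,i}^2-1)\to\eta_i$ in distribution; since the first limit is deterministic, Slutsky's theorem gives joint convergence $(A_{N,i},\sqrt{N}(A_{N,i}^2-1))\to(1,\eta_i)$. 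For each Gaussian block, $\sqrt{N}B_{N,j}\sim\N(0,2/\beta)=\xi_j$ exactly for every $N$, so there is nothing to prove.

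For the product-of-moments statement, independence gives
\[ \E\prod_{i=1}^m A_{N,i}^{p_i}(A_{N,i}^2-1)^{q_i}\prod_{j=1}^n B_{N,j}^{r_j} = \prod_{i=1}^m \E A_{N,i}^{p_i}(A_{N,i}^2-1)^{q_i}\prod_{j=1}^n \E B_{N,j}^{r_j}. \]
Each Gaussian factor equals $N^{-r_j/2}\E\xi_j^{r_j}$ exactly. Each chi factor I would rewrite as $N^{-q_i/2}\E A_{N,i}^{p_i}\bigl(\sqrt{N}(A_{N,i}^2-1)\bigr)^{q_i}$, and by the joint convergence just established the integrand converges in distribution to $\eta_i^{q_i}$. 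Multiplying all factors together with the prefactor $N^{(\sum_i q_i + \sum_j r_j)/2}$ then yields the claimed limit, provided each chi factor's convergence in distribution can be promoted to convergence of expectations.

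The main obstacle is this last promotion, which I would handle via uniform integrability of $\{A_{N,i}^{p_i}(\sqrt{N}(A_{N,i}^2-1))^{q_i}\}_N$. By Cauchy--Schwarz this reduces to showing that $\E A_{N,i}^{2p_i}$ and $\E\bigl(\sqrt{N}(A_{N,i}^2-1)\bigr)^{2q_i}$ are uniformly bounded in $N$. The former is immediate from the explicit $\chi^2$-moment formula
\[ \E A_{N,i}^{2s}=\prod_{\ell=0}^{s-1}\left(1-\frac{i}{N}+\frac{2\ell}{\beta N}\right), \]
which is a polynomial in $1/N$ tending to $1$. The latter follows by expanding $(A_{N,i}^2-1)^{2q_i}$ binomially and applying the same formula: the result is a polynomial in $1/\sqrt{N}$ whose leading term is precisely $\E\eta_i^{2q_i}=(2q_i-1)!!(2/\beta)^{q_i}$, hence is $O(1)$. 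These computations are of the same flavor as those already performed for Lemma \ref{chilemma}, so no new technical input is required beyond careful bookkeeping of the $N$-dependence.
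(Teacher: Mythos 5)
Your argument for the distributional convergence is essentially the paper's: both exploit independence to reduce to the one-dimensional limits of Lemma \ref{chilemma} and then use Slutsky's theorem (the paper phrases it as the ``elementary fact'' that joint convergence holds when one limit is a constant) to attach the $A_{N,i}\to 1$ coordinates. Where you genuinely diverge is the second claim, the convergence of moments. The paper's proof simply asserts that the distributional convergence ``proves the desired'' and never addresses why convergence in distribution can be upgraded to convergence of the expectations of these unbounded polynomial functionals; your proposal supplies exactly the missing ingredient, namely a uniform integrability argument based on the explicit $\chi^2$ moment formula $\E A_{N,i}^{2s}=\prod_{\ell=0}^{s-1}\bigl(1-\tfrac{i}{N}+\tfrac{2\ell}{\beta N}\bigr)$, together with the observation that the Gaussian factors and the product structure can be handled exactly. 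This is a more complete treatment than what appears in the paper. One small correction: to get uniform integrability of $Y_N:=A_{N,i}^{p_i}\bigl(\sqrt{N}(A_{N,i}^2-1)\bigr)^{q_i}$ you need $\sup_N\E Y_N^{1+\delta}<\infty$ for some $\delta>0$, so the Cauchy--Schwarz step should bound $\E Y_N^2$ and hence requires uniform bounds on $\E A_{N,i}^{4p_i}$ and $\E\bigl(\sqrt{N}(A_{N,i}^2-1)\bigr)^{4q_i}$, not the $2p_i$ and $2q_i$ versions you wrote (bounding only $\E|Y_N|$ does not give uniform integrability). Since your moment formula yields $O(1)$ bounds for every fixed even exponent, the fix is immediate and the proof stands.
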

\begin{proof}
It is clear from independence of all the random variables, and (ii) of lemma \ref{chilemma} that
\[ \sqrt{N} (A_{N,1}^2 - 1,\ldots,A_{N,m}^2 - 1, B_{N,1},\ldots,B_{N,n}) \to (\eta_1,\ldots,\eta_m,\xi_1,\ldots,\xi_n)\]
in distribution. We have the elementary fact that if two sequences of random variables $A_N$ and $B_N$ converge to random variables $A$ and $B$ respectively where $B$ is a constant, then $(A_N,B_N) \to (A,B)$ jointly in distribution. By (i) of lemma \ref{chilemma} and the aforementioned, this proves the desired.
\end{proof}

Define $A_{N,1},\ldots,A_{N,m},B_{N,1},\ldots,B_{N,n}$, $\eta_1,\ldots,\eta_m$, $\xi_1,\ldots,\xi_n$ as above, and let $C_{N,i} := A_{N,i}^2 - 1$. Then, let $A_N = (A_{N,1},\ldots,A_{N,m})$, $B_N = (B_{N,1},\ldots,B_{N,n})$, $C_N = (C_{N,1},\ldots,C_{N,m})$, $\boldsymbol{\eta} = (\eta_1,\ldots,\eta_m)$, $\boldsymbol{\xi} = (\xi_1,\ldots,\xi_n)$.

For our applications, the $A_{N,i}$ corresponds to the off-diagonal entry $Y(i,i+1)$ and $B_{N,j}$ corresponds to one of the diagonal entries. The importance of the previous lemmas is in identifying the order of terms. To illustrate this point, we introduce the following definitions. Let $x = (x_1,\ldots,x_m)$, $y = (y_1,\ldots,y_n)$, $z = (z_1,\ldots,z_m)$ be ordered sets of formal variables. Fix $f \in \R[x,y,z]$. We can define a sequence of random variables $\{ f(A_N,B_N,C_N) \}_{N=1}^\infty$ by evaluating $f$. If $f$ is a monomial, define
\[ \deg_{\N} f := \deg_y f + \deg_z f\]
which we will refer to as the $\N$-\textit{degree} of $f$. For general $f$, define
\[ \ldeg_{\N} f := \min \deg_{\N} \alpha \]
where the minimum is over all monomials $\alpha$ of $f$. Note that
\[\ldeg_\N fg = \ldeg_\N f + \ldeg_\N g.\]

Let $\ld(f)$ be the sum of the monomials of $f$ which have minimal $\N$-degree. Let $f_1,\ldots,f_\ell \in \R[x,y,z]$. Then
\begin{align*}
\E\prod_{i=1}^\ell f_i(A_N,B_N,C_N) &= \E\prod_{i=1}^\ell \ld(f_i)(A_N,B_N,C_N) + o(N^{-\ldeg_\N(f_1\cdots f_\ell)/2})\\
&= N^{-\ldeg_\N(f_1\cdots f_\ell)/2}\E \prod_{i=1}^\ell \ld(f_i)(1^m,\boldsymbol{\xi},\boldsymbol{\eta}) + o(N^{-\ldeg_\N(f_1\cdots f_\ell)/2})
\end{align*}
Both equalities follow from lemma \ref{asymp}.

\subsubsection{}\label{proof_CLT}

\begin{proof}[Proof of Theorem \ref{CLT}]
Define
\[ \Delta_{M,k} = \sqrt{M}( D_{M,k} - \E D_{M,k}). \]
Note that $\Delta_{M,k}$ is only dependent on the entries $(i,j)$ of $Y_M$ where $i,j$ is bounded by some constant dependent only on $k$. Let $S$ be the set of $\alpha_i$. For each $\alpha \in S$, we may create the subvector $\Delta_\alpha := (\Delta_{\lfloor \alpha_i N \rfloor,k_i})_{i: \alpha_i = \alpha}$. By the aforementioned, the collections $\{\Delta_\alpha\}_{\alpha \in S}$ are mutually independent for large $N$. Therefore it suffices to consider $\alpha_1 = \cdots = \alpha_\ell = 1$.

Let $A_{N,i} = Y_N(i,i+1)$, $B_{N,i} = Y_N(i,i)$, and $C_{N,i}=Y_N(i,i+1)^2-1$. Fix some large constant $K$ (dependent only on $k_1,\ldots,k_\ell$), in our case it suffices to choose $K = \max(k_1,\ldots,k_\ell)+1$. The elements of the collection
\[\mathcal{R}_N := \{A_{N,i}\}_{i=1}^K \cup \{B_{N,i}\}_{i=1}^K \cup \{C_{N,i}\}_{i=1}^K\]
are of the form of the identically named random variables from Section \ref{pre}. Define $\boldsymbol{\xi}$ and $\boldsymbol{\eta}$ as in Section \ref{pre}.

We show joint Gaussianity of $\sqrt{N}(\Delta_{N,k_i})_{i=1}^\ell$ by showing convergence of the joint moments to that of the appropriate Gaussian joint moments. In particular, we look at
\[\E\prod_{i=1}^\ell (\sqrt{N}\Delta_{N,k_i})^{j_i}\]
for some vector $(j_1,\ldots,j_\ell)$ of nonnegative integers. To do this, we first write the $\Delta_{N,k_i}$'s as polynomials in $\mathcal{R}_N$. Then, by the discussion in Section \ref{pre}, it suffices to just consider the leads of the $\Delta_{N,k_i}$'s evaluated at $\left(1^K,\frac{1}{\sqrt{N}}\boldsymbol{\xi},\frac{1}{\sqrt{N}}\boldsymbol{\eta}\right)$ when computing the joint moments.

First consider $k$ odd with $k < K$. Then
\[ \sqrt{N}\Delta_{N,k} = \sqrt{N}D_{N,k} = \sum_{\vec{i} \in \Lambda_k} \sqrt{N}T_{\vec{i}} =: F_k(\mathcal{R}_N)\]
for some fixed polynomial $F_k$ which is the sum of monomials $f_\vec{i}$
\[ f_\vec{i}(\mathcal{R}_N) := \prod_{j=1}^k B_{N,j}^{m_{\vec{i}}(j)}\prod_{j=1}^k A_{N,j}^{2\ell_{\vec{i}}(j)} \]
so that
\[ T_{\vec{i}} = \prod_{j=1}^k Y_N(i_j,i_{j+1}) = f_{\vec{i}}(\mathcal{R}_N). \]
The minimal $\N$-degree, which is $1$, corresponds to those $f_{\vec{i}}$ with $\vec{i} \in \D_k$.
\begin{align*}
\ld(F_k)(1^K,\boldsymbol{\xi},\boldsymbol{\eta}) & = \sum_{\vec{i} \in \D_k} \sqrt{N}f_{\vec{i}}(1^K,\boldsymbol{\xi},\boldsymbol{\eta})  \\
&= \sum_{h=1}^{\frac{k+1}{2}} \sum_{\vec{i} \in \I_{k,h}} \xi_h \\
&= \sum_{h=1}^{\frac{k+1}{2}}\sigma_{k,h}\xi_h.
\end{align*} 

Now consider $k$ even with $k < K$ even. Then
\[ \sqrt{N}\Delta_{N,k} = \sqrt{N}(D_{N,k} - \E D_{N,k}) = \sum_{\vec{i}\in\Lambda_k}\sqrt{N}(T_{\vec{i}}-\E T_{\vec{i}}). \]
We consider the asymptotics of
\[ \sum_{\vec{i}\in\Lambda_k}\sqrt{N}(T_{\vec{i}}-1),\]
as it is off from $\sqrt{N} \Delta_{N,k}$ by a decreasing constant of order $O(1/\sqrt{N})$. Write
\[ G_k(\mathcal{R}_N) := \sum_{\vec{i} \in \Lambda_k} \sqrt{N}(T_{\vec{i}} - 1) =: \sum_{\vec{i} \in \Lambda_k} g_{\vec{i}}(\mathcal{R}_N). \]
We have that $g_{\vec{i}}$, as a polynomial in $\mathcal{R}_N$, has $\N$-degree at least $2$ if $\vec{i}\not\in\D_k$. On the other hand, for $\vec{i} \in \D_k$, we have
\[ g_{\vec{i}}(\mathcal{R}_N) = \prod_{h=1}^{k/2+1}(1+C_{N,h})^{\ell_{\vec{i}}(h)} - 1. \]
We keep terms of $\N$-degree $1$, so we get
\[\ld(g_{\vec{i}})(\mathcal{R}_N) = \sum_{h=1}^{k/2+1}\ell_{\vec{i}}(h)C_{N,h}.\]
Thus,
\begin{align*}
\ld(G_k)(1^K,\boldsymbol{\xi},\boldsymbol{\eta}) &= \sum_{\vec{i}\in\D_k}\sqrt{N}g_{\vec{i}}(1^K,\boldsymbol{\xi},\boldsymbol{\eta}) \\
&= \sum_{\vec{i}\in\D_k}\sum_{h=1}^{k/2+1}\ell_{\vec{i}}(h)\eta_h \\
&= \sum_{h=1}^{k/2+1}\sigma_{k,h}\eta_h.
\end{align*}

Let us now compute the joint moments. As a notational convenience, we define
\[\zeta_{k,h}=\left\{ \begin{array}{cl} \eta_h & \text{if $k$ even} \\ \xi_h & \text{if $k$ odd.} \end{array} \right.\]
Then, by the above and Section \ref{pre}, we have that
\[\lim_{N\to\infty} \E\prod_{i=1}^\ell (\sqrt{N}\Delta_{N,k_i})^{j_i} = \E\prod_{i=1}^\ell\left(\sum_{h=1}^{\lceil(k_i+1)/2\rceil}\sigma_{k_i,h}\zeta_{k_i,h}\right)^{j_i}.\]
Thus,
\[\sqrt{N}(\Delta_{N,k_i})_{i=1}^\ell \to \left(\sum_{h=1}^{\lceil(k_i+1)/2\rceil}\sigma_{k_i,h}\zeta_{k_i,h}\right)_{i=1}^\ell\]
in distribution as $N\to\infty$. But note that $\zeta=\{\zeta_{0,h}\}\cup\{\zeta_{1,h}\}$ are independent Gaussians, so
\[\left(\sum_{h=1}^{\lceil(k_i+1)/2\rceil}\sigma_{k_i,h}\zeta_{k_i,h}\right)_{i=1}^\ell\]
is jointly Gaussian. Now, it remains to compute the covariances. Let
\[ \mathfrak{D}_k := \sum_{h=1}^{\lceil (k + 1)/2 \rceil} \sigma_{k,h} \zeta_{k,h}. \]
Recall
\[\cov(\zeta_{k_1,h_1},\zeta_{k_2,h_2}) = \frac{2}{\beta}\mathbf{1}_{k_1\equiv k_2\mathrm{mod}2}\mathbf{1}_{h_1=h_2},\]
so it suffices to consider the case where $k_1\equiv k_2\pmod{2}$. The covariance becomes
\[\cov(\mathfrak{D}_{k_1}, \mathfrak{D}_{k_2}) = \frac{2}{\beta}\sum_{h=1}^{\min(r_1,r_2)}\sigma_{k_1,h}\sigma_{k_2,h}\]
where $r_i=\lceil(k_i+1)/2\rceil$. By Lemma \ref{combo}, this is
\[ \cov(\mathfrak{D}_{k_1},\mathfrak{D}_{k_2}) = \frac{4}{\beta}\frac{k_1k_2}{k_1+k_2} \binom{k_1 + k_2 - 2}{\frac{k_1+k_2 -2}{2}}. \]
\end{proof}

\begin{lemma}
\label{combo}
Let $k_1$ and $k_2$ be two positive integers of the same parity and let $r_i=\lceil(k_i+1)/2\rceil$. Then,
\[\sum_{h=1}^{\min(r_1,r_2)}\sigma_{k_1,h}\sigma_{k_2,h} = 2\frac{k_1k_2}{k_1+k_2} \binom{k_1 + k_2 - 2}{\frac{k_1+k_2 -2}{2}} = k_1k_2C_{\frac{k_1+k_2 -2}{2}}.\]
\end{lemma}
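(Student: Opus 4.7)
The plan is to reduce the identity to a count of Dyck paths of length $k_1 + k_2 - 2$ via three steps: cyclic anchoring, reflection, and concatenation.

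First, I would exploit cyclic symmetry. By rotating each cyclic path so that its distinguished step (the unique self-loop at height $h$ when $k$ is odd, or a marked up-crossing of $(h, h+1)$ when $k$ is even) occupies the closing edge $(i_k, i_1)$, we obtain $\sigma_{k, h} = k \, \tau_{k, h}$. Here $\tau_{k, h}$ counts the anchored paths, which are precisely the $\pm 1$ walks $(i_1, \ldots, i_k)$ of length $k - 1$ that stay $\geq 1$ and visit $1$, with endpoints $i_1 = i_k = h$ for $k$ odd and $(i_1, i_k) = (h + 1, h)$ for $k$ even. Thus $\sigma_{k_1, h} \sigma_{k_2, h} = k_1 k_2 \, \tau_{k_1, h} \tau_{k_2, h}$.

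Next, applying the reflection principle -- once to count walks with minimum $\geq 1$, once for minimum $\geq 2$, then subtracting -- yields the uniform closed form
\[
\tau_{k, h} = \binom{k - 1}{\lfloor k/2 \rfloor + h - 1} - \binom{k - 1}{\lfloor k/2 \rfloor + h}.
\]
This is a standard ballot expression: equivalently, $\tau_{k, h}$ is the number of $\pm 1$ walks of length $k - 1$ from $0$ to the endpoint $e_h$ staying $\geq 0$, where $e_h := 2h - 2$ when $k$ is odd and $e_h := 2h - 1$ when $k$ is even. In each case $e_h$ has the same parity as $k - 1$, and $\{e_h : h \geq 1\}$ enumerates precisely the nonnegative integers of that parity.

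Finally, I would construct a concatenation bijection: given a pair $(W_1, W_2)$ of ballot walks of lengths $k_1 - 1$ and $k_2 - 1$ sharing the common endpoint $e_h$, the walk formed by following $W_1$ and then the time-reversal of $W_2$ is a $\pm 1$ walk of length $k_1 + k_2 - 2$ from $0$ back to $0$ staying $\geq 0$, i.e.\ a Dyck path. Conversely, any Dyck path of this length splits uniquely at time $k_1 - 1$ into such a pair, with $h$ recovered from the split value via $e_h$. This yields $\sum_{h \geq 1} \tau_{k_1, h} \tau_{k_2, h} = C_{(k_1 + k_2 - 2)/2}$; multiplication by $k_1 k_2$ together with $C_n = \tfrac{1}{n + 1} \binom{2n}{n}$ gives the stated formula. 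The main technical point requiring care is the parity bookkeeping: the two parities of $(k_1, k_2)$ collapse into a single Dyck-path count precisely because the parities of the endpoints $e_h$ match those of $k_1 - 1$ and $k_2 - 1$, so the concatenated walk lies in the correct Catalan family uniformly in both cases.
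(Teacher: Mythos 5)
Your proposal is correct and follows essentially the same three-step route as the paper's proof: the cyclic anchoring giving $\sigma_{k,h}=k\,\tau_{k,h}$, the identification of anchored closed walks with ballot paths from $0$ to $e_h$ staying nonnegative, and the concatenation of two such paths (one time-reversed) into a Dyck path of length $k_1+k_2-2$. The only deviation is in the middle step, where you invoke the reflection principle to get a closed-form ballot count while the paper exhibits an explicit cut-and-translate bijection onto its sets $\C_{k-1}^h$; the two are interchangeable and your parity bookkeeping via $e_h$ checks out in both cases.
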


\begin{proof}[Proof of Lemma \ref{combo} for $k_1$ odd]
Let $k_1=2k+1$. Note that
\[\sigma_{2k+1,h} = \sum_{\vec{i}\in\D_{2k}}\rho_{\vec{i}}(h)\]
where $\rho_{\vec{i}}(h)$ is the number of times $\vec{i}=(i_1,\ldots,i_{2k})$ hits $h$ and where we count contributions from $i_1$ twice, since there are $\rho_{\vec{i}}(h)$ choices on where to slip in an $(h,h)$ edge into a path from $\D_{2k}$ to get a path in $\I_{2k+1,h}$.

This proof has three key steps. The first step is to relate $\sigma_{2k+1,h}$ to the number of paths in $\D_{2k}$ where the first vertex is fixed to be $h$ using an argument where we consider all rotations of a given path. The second step is to show a bijection from paths in $\D_{2k}$ starting at $h$ to so called ``Catalan paths'', which are paths that start and end at different heights and at each step go up or down by $1$ (this is our $\C_{2k}^h$ in the proof below). The final step is to compute the desired sum, and we use an argument of gluing Catalan paths to form Dyck paths.

Let
\[\D_{2k}^h = \{\vec{i}=(i_1,\ldots,i_{2k})\in\D_{2k} : i_1=h\}\]
and let $\pi_{2k+1,h}=|\D_{2k}^h|$. We first show that $\sigma_{2k+1,h}=(2k+1)\pi_{2k+1,h}$.

Say that $\vec{i},\vec{j} \in \D_{2k}$ are \textit{cyclically equivalent} if $i_\ell = j_{\ell+a}$ for $\ell \in [2k]$ and a fixed constant $a$. The key idea of the proof is to split $\D_{2k}$ into equivalence classes based on cyclic equivalence of paths and find the contribution to $\sigma_{2k+1,h}$ due to a given equivalence class. Let $[\vec{i}]$ be this equivalence class for some $\vec{i}\in\D_{2k}$.

\begin{figure}[ht]
    \centering
    \includegraphics{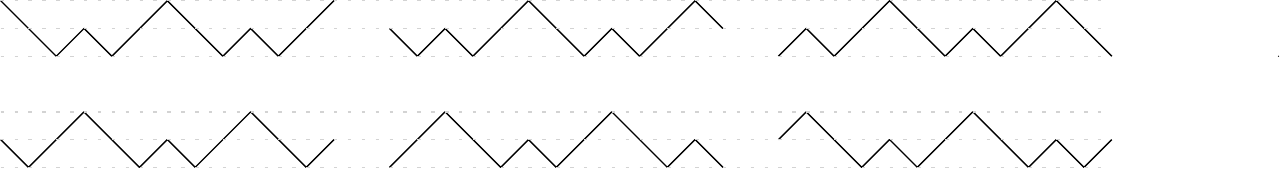}
    \caption{Cyclic equivalence of a path in $\D_{12}$ with period $6$.}
    \label{fig:cyc}
\end{figure}

Let $\vec{i}$ have period $p$, and let it hit $h$ at indices $1\le a_1,\ldots,a_m \le p$. Then, there are exactly $m$ elements of $\D_{2k}^h$ in $[\vec{i}]$, and exactly $2k/p-m$ elements of $[\vec{i}]$ that are not in $\D_{2k}^h$, see Figure \ref{fig:cyc} for an example. Now, if $\vec{j}\in\D_{2k}^h\cap[\vec{i}]$, then $\rho_{\vec{j}}(h)=mp+1$, and if $\vec{j}\in(\D_{2k}\setminus\D_{2k}^h)\cap[\vec{i}]$, then $\rho_{\vec{j}}(h)=mp$. Thus, from these observations, we see that
\[\sum_{\vec{j}\in[\vec{i}]}\rho_{\vec{j}}(h) = m(mp+1)+\left(\frac{2k}{p}-m\right)mp = (2k+1)m,\]
which is miraculously independent of $p$. Thus, each equivalence class contributes $2k+1$ times the number of elements of $\D_{2k}^h$ in it, so the total sum $\sigma_{2k+1,h}$ is $2k+1$ times the total number of elements of $\D_{2k}^h$, or $(2k+1)\pi_{2k+1,h}$.

Define
\[\C_{2k}^h = \{\vec{i}=(i_1,\ldots,i_{2k+1})\in\NN^{2k+1}:|i_j-i_{j+1}|=1\mbox{ and }i_1=1\mbox{ and }i_{2k+1}=2h-1\},\]
where we note that we have $\NN^{2k+1}$ rather than $\NN^{\Z/(2k+1)\Z}$. We claim that in fact, $\pi_{2k+1,h}=|\C_{2k}^h|$, and we show this by providing a bijection $\phi:\D_{2k}^h\to\C_{2k}^h$. Consider some $\vec{i}=(i_1,\ldots,i_{2k})\in\D_{2k}^h$. Let $a$ be the minimal element in $[2k]$ such that $i_a=1$. Then, define
\[\phi(\vec{i}) = (i_a,i_{a+1},\ldots,i_{2k},i_a+h-1,i_{a-1}+h-1,\ldots,i_1+h-1).\]
Now, we provide the inverse map $\psi:\C_{2k}^h\to\D_{2k}^h$. Consider some $\vec{i}=(i_1,\ldots,i_{2k+1})\in\C_{2k}^h$, and let $a$ be the largest element in $[2k+1]$ such that $i_a=h$. Define
\[\psi(\vec{i}) = (i_{2k+1}-h+1,i_{2k}-h+1,\ldots,i_a-h+1=i_1,i_2,\ldots,i_{a-1}).\]
One can easily check that $\phi\circ\psi$ and $\psi\circ\phi$ are both the identity, so $\pi_{2k+1,h}=|\C_{2k}^h|$, see figure \ref{fig:DC}.

\begin{figure}[h]
    \centering
    \includegraphics{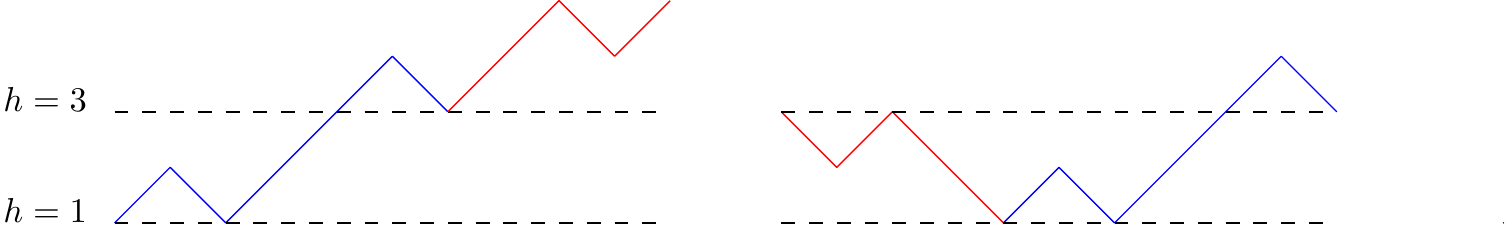}
    \caption{An example mapping between $\C_{10}^3$ and $\D_{10}^3$.}
    \label{fig:DC}
\end{figure}

Finally, we claim that $\displaystyle\bigcup_{h=1}^{\min(r_1,r_2)}\C_{k_1-1}^h\times\C_{k_2-1}^h$ is in bijection with Dyck paths of length $k_1+k_2-2$, as this would finish the proof due to the fact that $\sigma_{k_i,h}=k_i|\C_{k_i-1}^h|$. Consider $(\vec{i},\vec{j})=((i_1,\ldots,i_{k_1}),(j_1,\ldots,j_{k_2}))\in \C_{k_1-1}^h\times\C_{k_2-1}^h$. Consider the map that sends $(\vec{i},\vec{j})$ to the Dyck path $(i_1,\ldots,i_{k_1}=j_{k_2},j_{k_2-1},\ldots,j_1)$ of length $k_1+k_2-2$ (note that length here means number of edges) constructed from $(\vec{i},\vec{j})$. In fact, one can easily check that this map is a bijection from $\displaystyle\bigcup_{h=1}^{\min(r_1,r_2)}\C_{k_1-1}^h\times\C_{k_2-1}^h$ to Dyck paths of length $k_1+k_2-2$, completing the proof.
\end{proof}

\begin{proof}[Proof of Lemma \ref{combo} for $k_1$ even]
The proof will be very similar to that of the $k_1$ odd case. Let $k_1=2k$. Note that
\[\sigma_{2k,h} = \sum_{\vec{i}\in\D_{2k}}\ell_{\vec{i}}(h)\]
where $\ell_{\vec{i}}(h)$ is the number of times that $\vec{i}$ hits $(h,h+1)$. Let
\[\D_{2k}^{h,+} = \{\vec{i}=(i_1,\ldots,i_{2k})\in\D_{2k} : (i_1,i_2)=(h,h+1)\}\]
and let $\pi_{2k,h}=|\D_{2k}^{h,+}|$. We first show that $\sigma_{2k,h}=2k\pi_{2k,h}$. The proof is very similar to the odd case, and define $[\vec{i}]$ in the same way.

Let $\vec{i}$ have period $p$, and let it hit $(h,h+1)$ at indices $1\le a_1,\ldots,a_m\le p$ (i.e. $(i_{a_s},i_{a_s+1})=(h,h+1)$). Then, there are exactly $m$ elements of $\D_{2k}^{h,+}$ in $[\vec{i}]$. Now, if $\vec{j}\in[\vec{i}]$, then $\ell_{\vec{j}}(h)=mp$. Thus, we see that
\[\sum_{\vec{j}\in[\vec{i}]}\ell_{\vec{j}}(h) = \frac{2k}{p}mp = 2km,\]
which is independent of $p$. Thus, each equivalence class contributes $2k$ times the number of elements of $\D_{2k}^{h,+}$ in it, so the total sum $\sigma_{2k,h}$ is $2k$ times the total number of elements of $\D_{2k}^{h,+}$, or $2k\pi_{2k,h}$.

Define
\[\C_{2k-1}^h = \{\vec{i}=(i_1,\ldots,i_{2k})\in\NN^{2k}:|i_j-i_{j+1}|=1\mbox{ and }i_1=1\mbox{ and }i_{2k}=2h\}\]
and
\[\D_{2k-1}^h = \{\vec{i}=(i_1,\ldots,i_{2k})\in\NN^{2k}:|i_j-i_{j+1}|=1\mbox{ and }i_1=h+1\mbox{ and }i_{2k}=h\}.\]
Through effectively the same argument as in the $k$ odd case, we see that $|\C_{2k-1}^h|=|\D_{2k-1}^h|$, and it is obvious that $\D_{2k}^{h,+}$ is in bijection with $\D_{2k-1}^h$, so $|\D_{2k}^{h,+}|=|\C_{2k-1}^h|$. Now, through the exact same argument as in the odd case, we prove that
\[\sum_{h=1}^{\min(r_1,r_2)}\sigma_{k_1,h}\sigma_{k_2,h} = k_1k_2C_{\frac{k_1+k_2 -2}{2}},\]
thus completing the proof of Lemma \ref{combo}.
\end{proof}

\subsection{Proof of Theorem \ref{intvar}}
\begin{proof}
By Theorem \ref{CLT}, we know that $\{\Phi_{N,f}\}_{f \in \mathcal{P}}$ converges to a centered Gaussian family $\{\Phi_f\}_{f \in \mathcal{P}}$. We want to show that
\begin{equation} \label{covPhi}
\cov(\Phi_f,\Phi_g) = \frac{2}{\beta} \int_{-2}^2 f'(x) g'(x) \sigma(x) \, dx.
\end{equation}
Since the monomials $x^k$ form a basis for $\mathcal{P}$, by the bilinearity of covariance and of (\ref{covPhi}) it suffices to show that
\begin{align*}
\cov(\Phi_{x^k}, \Phi_{x^\ell}) &= \frac{2}{\beta} \int_{-2}^2 \frac{d}{dx}(x^k) \frac{d}{dx}(x^\ell) \sigma(x) \, dx \\
&= \frac{2k \ell}{\beta} \int_{-2}^2 x^{k+\ell-2} \sigma(x) \, dx.
\end{align*}
Recalling that the moments of the semicircle law are given by the Catalan numbers, we want to show
\[ \cov(\Phi_{x^k}, \Phi_{x^\ell}) = \mathbf{1}_{2|k+\ell} \frac{4}{\beta} \frac{k\ell}{k+\ell} \binom{k+\ell-2}{\frac{k+\ell-2}{2}} \]
which is given by Theorem \ref{CLT}.
\end{proof}

\section{Identification with Derivative of the Gaussian Free Field} \label{GFF}
In this section, we show that the asymptotic covariance structure can be identified with the derivative of the Gaussian free field.

\subsection{Preservation of Trace Difference}
\label{tridiagonal_discussion}
We demonstrate that the orthogonal conjugation of the GOE matrix into tridiagonal form does not alter the value of the difference of the trace. The same is true for unitary/symplectic conjugation of the GUE/GSE matrix into tridiagonal form. Therefore, the aforementioned results can be thought of as holding for a ``dense'' G$\beta$E matrix. This will be important for us to identify our results to the derivative of the Gaussian Free Field.
  
Recall that the procedure of tridiagonalizing a matrix is a sequence of applications of Householder conjugations. The relevant fact here is that we start with a dense matrix $M$, set $M_0 = M$ and have
\[ M_i = O_i M_{i-1} O_i^t\]
where $O_i$ is an orthogonal matrix of the form
\begin{align*}
O_i = \left[ \begin{array}{cc}
I_i & 0 \\
0 & P_i \end{array} \right]
\end{align*}
where $I_i$ is the $i\times i$ identity matrix and $P_i$ is some orthogonal $(n-i)\times(n-i)$ matrix.
  
Let $D: \R^{n\times n} \to \R$ be the operator
\[ D(M) := \tr M - \tr \widehat{M} \]
where $\widehat{M}$ is the lower principal submatrix. It remains to see that the Householder conjugations do not change the value of $D(M)$. By the structure of the orthogonal matrices, notice that
\[ (M_0)_{11} = (M_1)_{11} = \cdots = (M_{n-1})_{11}.\]
Furthermore, since trace is invariant under orthogonal conjugation, we have
\[ \tr M_0 = \tr M_1 = \cdots = \tr M_{n-1}.\]
Together, these observations give us
\[ DM_0 = \cdots = DM_{n-1}. \]
The relevant properties here indicate that the same argument works for the GUE and GSE, and more generally the G$\beta$E if one considers its heuristic ghost interpretation. For more discussion on the interpretation of G$\beta$E in terms of ``ghosts'' and ``shadows'', see \cite{GHOST}.

\subsection{Review of the Gaussian Free Field} \label{ssec:GFFrev}
Let us begin by recalling the identification with the Gaussian Free Field (GFF) for the Hermite matrices. Let $H_N$ be an $N\times N$ dense GOE matrix if $\beta = 1$, and a $N\times N$ dense GUE matrix if $\beta = 2$ (where we notationally suppress the dependence of the distribution of $H_N$ on $\beta$). Let us also impose the relation that $H_M$ is the principal (say lower right) submatrix of $H_N$ whenever $M \leq N$, for $\beta = 1,2$. The eigenvalues of $\frac{1}{\sqrt{N}} H_{\lfloor vN \rfloor}$ for $v > 0$ concentrate within the interval $(-2 \sqrt{v}, 2\sqrt{v})$. We define the domain on which our eigenvalues concentrate:
\[ \mathbb{D} = \{ (u_1,u_2) \in  \R \times \R_{> 0} : -2 \sqrt{u_2} < u_1 < 2 \sqrt{u_2} \}. \]
Define the map $\Omega: \mathbb{D} \to \mathbb{H}$ where $\mathbb{H}$ is the upper half plane of $\mathbb{C}$
\[ \Omega(\vec{u}) = \frac{u_1}{2} + i \sqrt{ u_2 - \left( \frac{u_1}{2} \right)^2} \]
This map pulls back the conformal structure on $\mathbb{H}$ onto the domain $\mathbb{D}$ of eigenvalues. Define $x(z),y(z)$ by
\[ \Omega^{-1}(z) = (x(z),y(z)) = (2 \Re z, |z|^2). \]
The GFF on $\mathbb{D}$ given by $\Omega$ is the random distribution $\mathfrak{h}$ on $\mathbb{D}$ whose covariance structure is identified by
\[ \cov(\langle \mathfrak{h},\varphi \rangle, \langle \mathfrak{h},\psi\rangle ) = \iint_{\mathbb{D}} \varphi(\vec{u}) \psi(\vec{v}) \left( - \frac{1}{2\pi} \log \left| \frac{\Omega(\vec{u}) - \Omega(\vec{v})}{\Omega(\vec{u}) - \overline{\Omega}(\vec{v})} \right| \right) \, d\vec{u} \, d\vec{v}.\]
Suppose we have a measure $\mu$ supported on a smooth curve $\gamma \subset \mathbb{D}$. Furthermore, let $g(\vec{u})$ be the density of $\mu$ with respect to the natural length measure on $\gamma$. If
\begin{equation} \label{eq:varhg}
\iint_{\gamma \times \gamma} g(\vec{u}) g(\vec{v}) \left( - \frac{1}{2\pi} \log \left| \frac{\Omega(\vec{u}) - \Omega(\vec{v})}{\Omega(\vec{u}) - \overline{\Omega}(\vec{v})} \right| \right) \, d\vec{u} \, d\vec{v} < \infty
\end{equation}
Then
\[ \langle \mathfrak{h}, g \rangle_\gamma := \langle \mathfrak{h},\mu \rangle \]
is a well-defined, centered Gaussian random variable with variance (\ref{eq:varhg}). If $\mu_1,\mu_2$ are measures supported on smooth curves $\gamma_1,\gamma_2$ with densities $g_1,g_2$ respectively, and both satisfy (\ref{eq:varhg}), then we have the covariance
\begin{equation}
\cov( \langle \mathfrak{h}, g_1 \rangle_{\gamma_1}, \langle \mathfrak{h}, g_2 \rangle_{\gamma_2}) = \iint_{\gamma_1 \times \gamma_2} g_1(\vec{u}) g_2(\vec{v}) \left( - \frac{1}{2\pi} \log \left| \frac{\Omega(\vec{u}) - \Omega(\vec{v})}{\Omega(\vec{u}) - \overline{\Omega}(\vec{v})} \right| \right) \, d\vec{u} \, d\vec{v}.
\end{equation}
We finally note that the GFF is conformally invariant. This is a property that will be used later on in the identification of our results with the derivative of the GFF.

The relation between the Hermite matrices and the GFF are given via the height function which is defined to be
\[ h_N(u_1,u_2) = \sqrt{\frac{\beta \pi}{2}} \cdot |\{ \mbox{eigenvalues of $H_{Nu_2}$ that are $< u_1$} \}|. \]

Let $\gamma_u := \Omega^{-1}\{|z|^2 = u\}$. For any $u,v > 0$, \cite{BORODIN} tells us that
\[ \lim_{N\to\infty} \cov \left( \int \! h_N(N^{1/2}x,N u) x^k \, dx, \int \! h_N(N^{1/2}x,N v) x^\ell \, dx \right) =  \cov\left( \langle \mathfrak{h},x^k \rangle_{\gamma_u} , \langle \mathfrak{h}, x^\ell \rangle_{\gamma_v} \right).\]

This is proven by showing that
\begin{equation}
\begin{multlined}
C_{x^k,x^\ell}(u,v) := \lim_{N\to\infty} \cov(N^{-k/2} \tr H_{\lfloor Nu\rfloor}^k, N^{-\ell/2} \tr H_{\lfloor Nv\rfloor}^\ell) \\
= \frac{\beta \pi k \ell}{2} \cov( \langle \mathfrak{h},x^{k-1} \rangle_{\gamma_u} , \langle \mathfrak{h}, x^{\ell-1} \rangle_{\gamma_v} )
\end{multlined}
\end{equation}
and relating the height function with the traces via
\begin{equation} \label{eq:htmat}
\int \! \left[ h_N(N^{1/2}x,N u) - \E\, h_N(N^{1/2}x,N u) \right] x^k \, dx = - \frac{N^{-\frac{k+1}{2}}}{k+1} \sqrt{\frac{\beta\pi}{2}} \left[ \tr H_{\lfloor Nu\rfloor}^{k+1} - \E\, \tr H_{\lfloor Nu\rfloor}^{k+1} \right].
\end{equation}

\subsection{The Derivative of the Gaussian Free Field}
In view of the convergence in Section \ref{ssec:GFFrev}, one may question whether this convergence is robust under differentiation. To phrase this more precisely, define the discrete derivative of the height function
\[ \Delta h_N(x,u) := N\left[ h_N(x,u) - h_N(x,u - N^{-1}) \right] \]
and consider the derivative $\partial_2 \mathfrak{h}(x,u)$ of the GFF $\mathfrak{h}(x,u)$ in the second variable whose covariance is given as follows
\[ \var(\langle \partial_2 \mathfrak{h}, f \rangle_{\gamma_u}) = \lim_{\e \to 0} \var \frac{\langle \mathfrak{h}, f \rangle_{\gamma_u} - \langle \mathfrak{h}, f \rangle_{\gamma_{u-\e}}}{\sqrt{\e}} \]
for smooth functions $f$ on $\gamma_u$. We consider $\partial_2 \mathfrak{h}(x,u)$ as a distribution on $\gamma_u$ for each $u$. The statement that the convergence from Section \ref{ssec:GFFrev} is robust under differentiation is expressed in the following theorem.

\begin{theorem}
For each $u > 0$
\[\lim_{N\to\infty} \cov \left( \langle N^{-1/2} \Delta h_N(Nx,N^{1/2}u), x^k \rangle,\langle N^{-1/2} \Delta h_N(Nx,N^{1/2}u), x^\ell \rangle \right) = \cov \left( \langle \partial_2 \mathfrak{h}, x^k \rangle_{\gamma_u} \langle \partial_2 \mathfrak{h}, x^\ell \rangle_{\gamma_u} \right). \]
\end{theorem}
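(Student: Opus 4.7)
The strategy is to reduce both sides of the claimed identity to explicit covariance expressions and match them via Theorem \ref{intvar}. For the left-hand side, the discrete derivative $\Delta h_N(\cdot,u)$ encodes the difference between the eigenvalue counting functions of two adjacent minors: its distributional $x$-derivative equals $N\sqrt{\beta\pi/2}\cdot\mu_{M}^{\text{raw}}$, where $\mu_M^{\text{raw}} = \sum_i\delta_{\Lambda_i^{(M)}}-\sum_j\delta_{\Lambda_j^{(M-1)}}$ is the signed measure of the (unrescaled) dense-matrix eigenvalues for the matrix size $M$ read off the second argument. Integration by parts in $x$, combined with the identity $\tr H_M^{k+1} - \tr H_{M-1}^{k+1} = (M\beta/2)^{(k+1)/2}D_{M,k+1}$ relating the dense ensemble to its tridiagonalization (as in Section \ref{tridiagonal_discussion}), gives
\[
\langle N^{-1/2}\Delta h_N(Nx, N^{1/2}u), x^k\rangle = c_{k,u,\beta}\,\sqrt{M_N}\bigl(D_{M_N,k+1}-\E D_{M_N,k+1}\bigr) + o(1),
\]
for an explicit constant $c_{k,u,\beta}$ and appropriate $M_N\to\infty$. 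Applying Theorem \ref{CLT} jointly in $k$ and Theorem \ref{intvar} produces a limiting LHS covariance of the form
\[
c_{k,u,\beta}c_{\ell,u,\beta}\cdot\frac{2}{\beta}\int_{-2}^2 (x^{k+1})'(x^{\ell+1})'\sigma(x)\,dx.
\]

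For the right-hand side, compute $\cov(\langle\partial_2\mathfrak{h},x^k\rangle_{\gamma_u},\langle\partial_2\mathfrak{h},x^\ell\rangle_{\gamma_u})$ directly from the definition, setting $C(u,v) := \cov(\langle\mathfrak{h},x^k\rangle_{\gamma_u},\langle\mathfrak{h},x^\ell\rangle_{\gamma_v})$ and writing
\[
\cov\bigl(\langle\partial_2\mathfrak{h},x^k\rangle_{\gamma_u},\langle\partial_2\mathfrak{h},x^\ell\rangle_{\gamma_u}\bigr) = \lim_{\e\to 0}\frac{C(u,u)-C(u,u-\e)-C(u-\e,u)+C(u-\e,u-\e)}{\e}.
\]
In $C(u,v)$, parametrize $\gamma_u$ via $\Omega(\vec a) = \sqrt u\,e^{i\theta}$, so that the pairings become line integrals on the semicircles $\{|z|=\sqrt u\}$ and $\{|z|=\sqrt{u-\e}\}$ in $\mathbb{H}$ against the Green kernel $K(z,w)=-\frac{1}{2\pi}\log\bigl|(z-w)/(z-\bar w)\bigr|$. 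Expanding $K$ carefully around its logarithmic diagonal singularity extracts a delta-type contribution in the $\e\to 0$ limit, reducing the double integral to a single integral on $\gamma_u$ of the form $\frac{2}{\beta}\int f'(x)g'(x)\sigma_u(x)\,dx$ for $f=x^k,\,g=x^\ell$ and the semicircle density $\sigma_u$ on $[-2\sqrt u,\,2\sqrt u]$. A linear change of variables to $[-2,2]$ then matches this expression with the LHS limit from the previous paragraph.

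The main obstacle is the extraction of the $O(\e)$ term from the GFF covariance difference, since the kernel $K$ is logarithmically singular on the diagonal $\gamma_u\times\gamma_u$: naive Taylor expansion around $(u,u)$ would give a vanishing limit, and the actual contribution comes entirely from the singular part of $K$. A convenient way around this is to exploit the conformal invariance of $\mathfrak{h}$: the map $z\mapsto\log z$ sends the nested family of semicircles $\{|z|=\sqrt u\}$ to parallel horizontal lines in a strip, so the $u$-derivative becomes a translation-type directional derivative of the GFF amenable to standard Gaussian calculus. Matching the resulting constants completes the proof, with the $\beta$-dependence entering only as an overall $2/\beta$ factor consistent on both sides. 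This mirrors the approach in \cite{ERDOS} for the Wigner case $\beta\in\{1,2\}$, the only novelty being the $\beta$-prefactor propagating from Theorem \ref{intvar}.
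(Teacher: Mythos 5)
Your treatment of the left-hand side is exactly the paper's: relation (\ref{eq:htmat}) converts $\Delta h_N$ into a difference of traces of consecutive minors, the invariance of $D(M)=\tr M - \tr \widehat{M}$ (and of the joint eigenvalue law of a matrix and its minor) under tridiagonalization from Section \ref{tridiagonal_discussion} converts that into $\sqrt{N}(D_{N,k+1}-\E D_{N,k+1})$, and Theorem \ref{intvar} gives the limit $\pi\int_{-2}^2 x^{k+\ell}\sigma(x)\,dx$ after the $\sqrt{\beta\pi/2}$ normalization of $h_N$ cancels the $2/\beta$ from Theorem \ref{intvar}. That part is fine.

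The gap is on the right-hand side. You assert that the second difference quotient of the GFF covariance reduces to $\frac{2}{\beta}\int f'(x)g'(x)\sigma_u(x)\,dx$ with $f=x^k$, $g=x^\ell$, and that ``the $\beta$-dependence enters as an overall $2/\beta$ factor consistent on both sides.'' This cannot be right: $\mathfrak{h}$ is defined by the kernel $-\frac{1}{2\pi}\log\left|\frac{\Omega(\vec{u})-\Omega(\vec{v})}{\Omega(\vec{u})-\overline{\Omega}(\vec{v})}\right|$, which contains no $\beta$, so $\cov(\langle\partial_2\mathfrak{h},x^k\rangle_{\gamma_u},\langle\partial_2\mathfrak{h},x^\ell\rangle_{\gamma_u})$ is necessarily $\beta$-free (all $\beta$-dependence on the left-hand side is absorbed by the $\sqrt{\beta\pi/2}$ in the definition of $h_N$). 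The functional form is also wrong: the correct evaluation, which the paper imports from \cite{ERDOS}, is $\var(\langle\partial_2\mathfrak{h},f\rangle_{\gamma_1})=\pi\int_{-2}^2 f(x)^2\sigma(x)\,dx$, i.e.\ the test functions appear undifferentiated. Your form $k\ell\int x^{k+\ell-2}\sigma\,dx$ and the correct $\pi\int x^{k+\ell}\sigma\,dx$ differ by more than a constant (compare $k=\ell=1$ against $k=\ell=2$), so no ``linear change of variables'' can reconcile them, and the concluding matching step fails as written. The underlying computation --- extracting the $O(\e)$ contribution of the logarithmic diagonal singularity of the Green kernel under the strip coordinates $z\mapsto\log z$ --- is the right idea and is essentially the content of the \cite{ERDOS} lemma, but you have not carried it out, and the answer you guessed for it is the covariance of Theorem \ref{intvar} (which lives on the matrix side, where the derivatives and the $2/\beta$ belong) rather than the covariance of the sectional derivative of the GFF. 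To repair the argument, either perform that singular expansion honestly or, as the paper does, cite the \cite{ERDOS} formula and verify directly that $\pi\int_{-2}^2 x^{k+\ell}\sigma(x)\,dx$ agrees with the limit you obtained for the left-hand side.
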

\begin{proof}
By the conformal invariance of the GFF, we may take $u = 1$. In \cite{ERDOS}, it was shown that
\[ \var\left( \langle \partial_2 \mathfrak{h}, f \rangle_{\gamma_1} \right) = \pi \int_{-2}^2 f(x)^2 \sigma(x) \, dx. \]
In particular
\[ \cov \left(\langle \partial_2 \mathfrak{h}, x^k \rangle_{\gamma_1}, \langle \partial_2 \mathfrak{h}, x^\ell \rangle_{\gamma_1} \right) = \pi \int_{-2}^2 x^{k+\ell} \sigma(x) \, dx. \]
On the other hand, by the relation (\ref{eq:htmat}), we have that
\begin{equation*}
\begin{multlined}
\cov \left( N^{-1/2} \int \Delta h_N(x,u) x^k \, dx, N^{-1/2} \int \Delta h_N(x,u) x^\ell \, dx \right) = \frac{N^{-\frac{k+\ell+2}{2} - 1}}{(k+1)(\ell+1)} \frac{\beta \pi}{2} \\
\times \cov  \left(\tr H_{\lfloor Nu \rfloor}^{k+1} - \tr H_{\lfloor Nu \rfloor - 1}^{k+1}, \tr H_{\lfloor Nu \rfloor}^{\ell+1} - \tr H_{\lfloor Nu \rfloor - 1}^{\ell+1} \right).
\end{multlined}
\end{equation*}
By the discussion in Section \ref{tridiagonal_discussion}, right side of the equality above is equal to
\[ \frac{1}{(k+1)(\ell+1)} \frac{\beta\pi}{2} \cov(\sqrt{N} D_{N,k+1}, \sqrt{N} D_{N,\ell+1}) \to \pi \int_{-2}^2 x^{k+\ell} \sigma(x) \, dx \]
where the convergence follows by Theorem \ref{intvar}.
\end{proof}

\section*{Appendix - Constant Order Spacing}
From Theorem \ref{CLT}, a pair of trace differences $D_{N,k}$, $D_{M,\ell}$ are eventually independent as $M,N$ tend to infinity if $\frac{M-N}{N}$ converges to some nonzero constant. We show that this is no longer the case in general if $M-N$ remains some fixed constant.

Recall the notation from Section \ref{proof_CLT} that $\{\zeta_{k,h}\}$ are a centered Gaussian family with covariance relation
\[\cov(\zeta_{k_1,h_1},\zeta_{k_2,h_2}) = \frac{2}{\beta}\mathbf{1}_{k_1\equiv k_2\mathrm{mod}2}\mathbf{1}_{h_1=h_2}.\]
By very similar arguments as in Section \ref{proof_CLT}, we have the following proposition.

\begin{proposition}
For an integer $r > 0$, constants $C_1,\ldots,C_r$ and nonnegative integers $k_1,\ldots,k_r$, we have
\[\sqrt{N}(D_{N-C_i,k_i})_{i=1}^r\to\left(\sum_{h=1}^{\lceil (k_i+1)/2\rceil}\sigma_{k_i,h}\zeta_{k_i,h+C_i}\right)_{i=1}^r\]
in distribution as $N\to\infty$.
\end{proposition}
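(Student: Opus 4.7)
The plan is to run the proof of Theorem \ref{CLT} with a single cosmetic modification: an index shift to account for $D_{N-C_i, k_i}$ involving a principal submatrix of $Y_N$ rather than $Y_N$ itself. The key observation is that, since $X_{N-C_i}$ is by convention the lower right $(N-C_i)\times(N-C_i)$ principal submatrix of $X_N$,
\[ Y_{N-C_i}(a,b) = \sqrt{\tfrac{N}{N-C_i}}\, Y_N(a+C_i,\, b+C_i), \]
and the prefactor equals $1 + O(1/N)$, which gets absorbed into the $o$-error terms. Consequently $\sqrt{N}(D_{N-C_i, k_i} - \E D_{N-C_i, k_i})$ is a polynomial in the shifted window of entries of $Y_N$ indexed by $[C_i+1, N]$. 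For $k_i$ odd we have $\E D_{N-C_i, k_i} = 0$ exactly, so the statement as written makes sense without centering; for $k_i$ even one should read it with centering, exactly as in Section \ref{proof_CLT}.

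First, I would fix $K$ larger than $\max_i (k_i + C_i + 1)$ and enlarge the working collection
\[ \mathcal{R}_N = \{A_{N,i}\}_{i=1}^K \cup \{B_{N,i}\}_{i=1}^K \cup \{C_{N,i}\}_{i=1}^K \]
so that every $\sqrt{N}(D_{N-C_i,k_i} - \E D_{N-C_i,k_i})$ is simultaneously a polynomial in the single collection $\mathcal{R}_N$. Joint asymptotics for $\mathcal{R}_N$ are then provided by Lemma \ref{asymp}, and the $\N$-degree calculus from Section \ref{pre} applies verbatim to this enlarged collection.

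Next, I would repeat the lead computation of Section \ref{proof_CLT} for each index $i$. The minimum $\N$-degree contributions still come from paths in $\D_{k_i}$; what changes is that each such path now lives in the shifted window starting at $C_i + 1$ in $Y_N$. Consequently, evaluating the lead at $(1^K, \boldsymbol{\xi}, \boldsymbol{\eta})$ yields
\[ \sum_{h=1}^{\lceil(k_i+1)/2\rceil} \sigma_{k_i,h}\, \zeta_{k_i,\, h+C_i}, \]
with the shift $h \mapsto h + C_i$ simply recording that the relevant diagonal entry (for $k_i$ odd) or off-diagonal entry (for $k_i$ even) of $Y_N$ sits at position $h + C_i$ rather than $h$. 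Joint Gaussianity and the limiting covariance then follow from the same joint-moment calculation as in the proof of Theorem \ref{CLT}, where now two components $i,j$ share a common Gaussian limit variable precisely when $k_i \equiv k_j \pmod{2}$ and $h + C_i = h' + C_j$.

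The main obstacle is just bookkeeping: tracking the index shift through the path decomposition, absorbing the $\sqrt{N/(N-C_i)}$ rescaling into lower-order terms, and verifying that the enlarged collection $\mathcal{R}_N$ still consists of independent entries with the marginals required by Lemma \ref{asymp}. Once these points are recorded, the remainder of the argument is literally the one given in Section \ref{proof_CLT}.
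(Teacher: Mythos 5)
Your proposal is correct and follows exactly the route the paper intends: the paper gives no separate proof, stating only that the proposition follows ``by very similar arguments as in Section \ref{proof_CLT},'' and your index-shift bookkeeping (with the $\sqrt{N/(N-C_i)}$ prefactor absorbed into lower-order terms and the lead evaluation picking up $\zeta_{k_i,h+C_i}$) is precisely the right way to fill that in. Your observation that the statement must be read with centering for $k_i$ even is also consistent with how the paper uses the result.
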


Since the limiting random vector is Gaussian, the distribution is determined by the covariance. To obtain the covariance structure, it suffices to compute the covariance for the pair $\displaystyle\left(\sum_{h=1}^{\lceil (k+1)/2 \rceil} \sigma_{k,h} \zeta_{k,h}, \sum_{h=1}^{\lceil (\ell+1)/2 \rceil} \sigma_{\ell,h} \zeta_{\ell,h+C}\right)$. We see that
\begin{align*}
\cov\left(\sum_{h=1}^{\lceil (k+1)/2\rceil}\sigma_{k,h}\zeta_{k,h},\sum_{h=1}^{\lceil (\ell+1)/2\rceil}\sigma_{\ell,h}\zeta_{\ell,h+C}\right)
&= \frac{2}{\beta}\sum_{h\in S}\sigma_{k,h}\,\sigma_{\ell,h-C} \\
&= \frac{2k\ell}{\beta}\sum_{h\in S}\pi_{k,h}\,\pi_{\ell,h-C}
\end{align*}
where $S:=\left[1,\lceil\frac{k+1}{2}\rceil\right]\cap\left[1+C,\lceil\frac{\ell+1}{2}\rceil+C\right]$ and $k\equiv\ell\pmod{2}$ (otherwise the covariance is $0$). From Section \ref{proof_CLT}, note that $\pi_{k,h}=|\C_{k-1}^h|$, so $\pi_{k,h}\pi_{\ell,h-C} = |\C_{k-1}^h\times\C_{\ell-1}^{h-C}|$. We recall that
\[\C_{k-1}^h = \{\vec{i}=(i_1,\ldots,i_k)\in\NN^k : |i_j-i_{j+1}|=1\mbox{ and }i_1=1\mbox{ and }i_k=2h-1+\mathbf{1}_{2\mid k}\}.\]
Note that there is a bijection between $\displaystyle\bigcup_{h\in S}\C_{k-1}^h\times\C_{\ell-1}^{h-C}$ and $\C_{k+\ell-2,C+1}$ given by gluing endpoints of the paths in the following manner
\[((i_1,\ldots,i_k),(j_1,\ldots,j_\ell))\to(i_1,\ldots,i_k=j_\ell+2C,j_{\ell-1}+2C,\ldots,j_1+2C=2C+1),\]
so
\[\sum_{h\in S}\pi_{k,h}\,\pi_{\ell,h-C}=\sum_{h\in S}|\C_{k-1}^h\times\C_{\ell-1}^{h-C}| = \left|\bigcup_{h\in S}\C_{k-1}^h\times\C_{\ell-1}^{h-C}\right|=|\C_{k+\ell-2}^{C+1}|=\pi_{k+\ell-1,C+1}.\]
Thus,
\[\cov\left(\sum_{h=1}^{\lceil (k+1)/2\rceil}\sigma_{k,h}\zeta_{k,h},\sum_{h=1}^{\lceil (\ell+1)/2\rceil}\sigma_{\ell,h}\zeta_{\ell,h+C}\right)=\frac{2k\ell}{\beta}\pi_{k+\ell-1,C+1}.\]
One can check that this is also
\[\frac{2k\ell}{\beta}\left[\binom{k+\ell-2}{\frac{k+\ell}{2}-C-1}-\binom{k+\ell-2}{\frac{k+\ell}{2}-C-2}\right].\]
As a check, in the case $C=0$, we see that
\[\binom{k+\ell-2}{\frac{k+\ell}{2}-1}-\binom{k+\ell-2}{\frac{k+\ell}{2}-2}=C_{\frac{k+\ell}{2}-1},\]
so
\[\cov(\sqrt{N}D_{N,k},\sqrt{N}D_{N,\ell})\to\frac{2k\ell}{\beta}C_{\frac{k+\ell}{2}-1}=\frac{4}{\beta}\frac{k\ell}{k+\ell}\binom{k+\ell-2}{\frac{k+\ell}{2}-1},\]
which is what we showed in Theorem \ref{CLT}.
\pagebreak

\bibliography{mybib}
\bibliographystyle{abbrv}
\end{document}